\tikzset{mytext/.style={font=\small, text=black}}
\newtheorem{Theorem}{Theorem}
\newtheorem{Conjecture}{Conjecture}
\newtheorem{proposition}{Proposition}[section]
\newtheorem{lemma}[proposition]{Lemma}
\newtheorem{corollary}[proposition]{Corollary}
\newtheorem{Question}[Conjecture]{Question}
\theoremstyle{definition}
\newtheorem{remark}[proposition]{Remark}
\newcommand{\vr}{\le_{\mathrm{vr}}}
\newcommand\restr[2]{{
		\left.\kern-\nulldelimiterspace 
		#1 
		\littletaller 
		\right|_{#2} 
}}
\numberwithin{equation}{section}
\title{Virtual retracts in groups acting on rooted trees}
\author{Jorge Fariña-Asategui and Jon Merladet Urig\"uen}
\address{Jorge Fariña-Asategui: Centre for Mathematical Sciences, Lund University, 223 62 Lund, Sweden -- Department of Mathematics, University of the Basque Country UPV/EHU, 48080 Bilbao, Spain}
\email{jorge.farina\_asategui@math.lu.se}
\address{Jon Merladet Urig\"uen: CGTA, School of Mathematical Sciences, University of Southampton, Highfield, Southampton, SO17~1BJ, United Kingdom}
\email{J.F.Merladet@soton.ac.uk}
\keywords{Virtual retracts, branch and weakly branch groups, iterated monodromy groups, euclidean orbifolds, Hausdorff spectrum.}
\subjclass[2020]{Primary: 20F65, 37F10. Secondary: 20E08, 28A78}
\thanks{The first author is supported by the Spanish Government, grant PID2020-117281GB-I00, partly with FEDER funds. The first author also acknowledges support from the Walter Gyllenberg Foundation from the Royal Physiographic Society of Lund}
\begin{document}

\begin{abstract}

We study virtual retracts in groups acting on rooted trees. We show that finitely generated branch groups do not have the local retraction (LR) property. Furthermore, we specialize to iterated monodromy groups of post-critically finite quadratic complex polynomials and show that the (LR) property characterizes, among post-critically finite quadratic complex polynomials, those with a euclidean orbifold, i.e. the powering map and the Chebyshev polynomial. Lastly, we show that periodic quadratic complex polynomials provide new examples of pro-$2$ groups with complete finitely generated Hausdorff spectrum.

\end{abstract}

\maketitle

\section{introduction}
\label{section: introduction}

Let $G$ be a finitely generated group. The \textit{profinite topology} in $G$ is given by declaring all cosets of finite-index subgroups of $G$ to form a basis of open sets. If the trivial subgroup of $G$ is closed in the profinite topology, we say that $G$ is \textit{residually finite}. Similar notions are the notions of \textit{subgroup separable} (or \textit{locally extended residually finite}) and \textit{extended residually finite}, meaning that finitely generated subgroups  and respectively all subgroups of $G$, are closed in the profinite topology. The notion of being residually finite is quite general. In fact, it is equivalent to $G$ acting faithfully on a spherically homogeneous rooted tree. On the other side of the spectrum, being extended residually finite is a quite restrictive property, only known to hold for virtually polycyclic groups \cite{Malcev} and certain abelian and nilpotent groups \cite{ERF}.

Subgroup separability lies somewhere in between the general property of being residually finite and the restrictive property of being extended residually finite. A celebrated result of Hall shows that free groups are subgroup separable \cite{Hall}. This has been generalized to surface groups, limit groups and (partially) to right-angled Coxeter groups by Scott \cite{Scott}, Wilton \cite{Wilton} and Haglund \cite{Haglund}, respectively. These results came along a stronger property: the local retraction property (LR).

A subgroup $H\le G$ is said to be a \textit{retract} of $G$ if there exists a group homomorphism $f:G\to H$ which restricts to the identity in $H$. A subgroup $H\le G$ is a \textit{virtual retract} of $G$ if there exists a finite-index subgroup $K\le_f G$ such that $H$ is a retract of $K$.  We say that $G$ has the \textit{local retraction (LR)} property if every finitely generated subgroup of~$G$ is a virtual retract of $G$ and the \textit{(VRC)} property if every cyclic subgroup is a virtual retract of $G$. Properties (LR) and (VRC) imply subgroup separability and cyclic subgroup separability (i.e. cyclic subgroups are closed in the profinite topology) respectively. The study of retracts was formally initiated by Long and Reid in \cite{Long-Reid} and expanded in \cite{J-A} and \cite{Ashot}; see \cite{ Brid-Wilt, Haglund} for more connections between separability and retracts.

As residually finite groups can be identified with subgroups of $\mathrm{Aut}~T$, i.e. the automorphism group of a spherically homogeneous rooted tree $T$, in what follows we focus on subgroups of $\mathrm{Aut}~T$. An important class of subgroups of $\mathrm{Aut}~T$ are the classes of branch and weakly branch groups, introduced by Grigorchuk in \cite{Branch}. For any vertex $v\in T$, we define the \textit{rigid vertex stabilizer} $\mathrm{rist}_G(v)$ as the subgroup of $G$ consisting of the elements in $G$ which only move descendants of $v$. For each level $n\ge 1$, the \textit{rigid level stabilizer} $\mathrm{Rist}_G(n)$ is the normal subgroup given by the direct product of the rigid vertex stabilizers of the vertices at level $n$. A subgroup $G\le \mathrm{Aut}~T$ is \textit{level-transitive} if it acts transitively on every level of $T$. A level-transitive subgroup whose rigid level stabilizers are infinite (or of finite index in $G$) is called \textit{weakly branch} (respectively \textit{branch}).

Subgroup separability of branch groups has been actively considered in the last decades. Grigorchuk and Wilson proved in \cite{GW} that the first Grigorchuk group is subgroup separable, a result later extended to the Gupta-Sidki 3-group by Garrido in \cite{Gar16} and to branch groups satisfying the so-called subgroup induction property by Francoeur and Leemann in \cite{FL}. Recently, the extended residually finite property has been studied in branch and weakly branch groups by the first author, Leemann and Nagnibeda in \cite{JPHT}.

Interestingly, subgroup separability on branch groups has not been proved via virtual retracts, contrary to the classical results on free groups \cite{Hall} and limit groups \cite{Wilton}. In fact, to the best of our knowledge, there has not been any detailed study of virtual retracts in branch groups, although related notions are being considered for certain branch groups by Bodart,  Carvahlo and Nyberg-Brodda in a work in progress (personal communication). Thus, our first goal in the present paper is to close this gap and study virtual retracts in branch and weakly branch groups. As (LR) is preserved by subgroups, Minasyan's example \cite[Example~3.6 and Proposition~3.7]{Ashot} shows that the first Grigorchuk group is not (LR). This motivates the following natural question:

\begin{Question}
\label{Question: branch with LR}
    Does there exist a finitely-generated branch group with property $\mathrm{(LR)}$?
\end{Question}

The first goal of the present paper is to settle \cref{Question: branch with LR}. In fact, we answer it in the negative:

\begin{Theorem}
\label{Theorem: branch not LR}
    Let $G$ be a finitely-generated branch group. Then $G$ is not $\mathrm{(LR)}$.
\end{Theorem}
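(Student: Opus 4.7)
My plan is to produce, inside any finitely-generated branch group $G$, a finitely generated subgroup that fails (LR), and then invoke the fact — noted in the paper — that (LR) is preserved under taking subgroups. This reduces the theorem to exhibiting, in each such $G$, a subgroup isomorphic (or structurally analogous) to a Minasyan-type example from \cite{Ashot}.

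To construct this subgroup I would exploit the characteristic features of branch groups: each rigid vertex stabilizer $\mathrm{rist}_G(v)$ is infinite, rigid stabilizers of disjoint vertices commute pairwise, each rigid level stabilizer $\mathrm{Rist}_G(n)$ has finite index, and $G$ acts transitively on every level. Fixing a vertex $v$ and a nontrivial element $g \in \mathrm{rist}_G(v)$, level-transitivity supplies elements of $G$ moving $v$ to its level-siblings, yielding an infinite family of pairwise-commuting conjugates $g_i$ of $g$ inside $G$. Taking finitely many of these conjugates together with the conjugating elements produces a finitely generated subgroup $H \le G$ that carries the combinatorial pattern of Minasyan's example: a distinguished cyclic (or finitely generated) subgroup whose centralizer-and-conjugation structure is rigid enough to obstruct any virtual retraction.

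Supposing $G$ were (LR), so would $H$ be, providing a finite-index subgroup $K \le_f H$ containing $g$ and a retraction $f \colon K \to \langle g \rangle$. After passing to a finite-index subgroup one may assume that the chosen conjugates $g_i$ and the chosen conjugating elements lie in $K$. Restricting $f$ to the centralizer $C_K(g)$ gives a retraction $C_K(g) \to \langle g \rangle$, so $C_K(g) = \langle g \rangle \times \ker(f|_{C_K(g)})$. The commuting conjugates $g_i$ must then be sent into $\langle g \rangle$ in a way compatible with the conjugation relations inherited from $G$, and a counting argument in the spirit of Minasyan's Proposition~3.7 \cite{Ashot} shows that no such assignment can be simultaneously equivariant and consistent with being a retraction. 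The main obstacle, which will require the most care, is making the first two steps uniform across the class of all branch groups: one must ensure that the conjugates $g_i$ and the conjugating elements can always be chosen to reproduce Minasyan's configuration exactly. A secondary subtlety is handling torsion branch groups such as the Grigorchuk group, where $g$ itself has finite order; here one replaces $\langle g \rangle$ by a finitely generated infinite subgroup built as an iterated product of commuting conjugates $g_{i_1}\cdots g_{i_k}$ supplied by the self-similar embedding $\psi \colon \mathrm{Rist}_G(1) \hookrightarrow G \times \cdots \times G$, thereby still fitting the Minasyan framework.
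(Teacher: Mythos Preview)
Your plan has a genuine gap in the torsion case that your proposed fix does not close. If $g$ has finite order then $\langle g\rangle$ is a finite subgroup of the residually finite group $H$, and by \cref{lem: props retr}\textcolor{teal}{(ii)} it \emph{is} a virtual retract of $H$; no contradiction can possibly arise from analysing a putative retraction $K\to\langle g\rangle$. Your fix---replacing $\langle g\rangle$ by ``a finitely generated infinite subgroup built as an iterated product of commuting conjugates \dots\ supplied by the self-similar embedding $\psi$''---does not repair this: branch groups are not assumed self-similar, so no such $\psi$ is available in general, and you have neither specified the replacement subgroup nor explained why a Minasyan-type obstruction would apply to it. Since torsion branch groups exist (Grigorchuk's group, Gupta--Sidki groups), and in those \emph{every} cyclic subgroup is a virtual retract, your scheme as written cannot produce a witness there. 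Even in the torsion-free case, the ``counting argument'' you allude to is not spelled out, and it is unclear what relations among the $g_i$ and the conjugators would force an inconsistency; commuting conjugates and a retraction onto $\langle g\rangle\cong\mathbb{Z}$ do not by themselves yield a contradiction.

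The paper's proof is organised quite differently and does not attempt a single uniform construction. For groups with torsion it uses a Minasyan-style argument (\cref{lemma: LR with torsion}), but the witness is not cyclic: one takes $g$ of prime order $p$ with finite orbit $\mathcal{O}=\{v,\dots,v^{g^{p-1}}\}$ and forms $A=H\rtimes\langle g\rangle$, where $H\cong\mathrm{rist}_G(v)$ is the \emph{diagonal} copy inside $\prod_{u\in\mathcal{O}}\mathrm{rist}_G(u)$; the contradiction comes from \cref{lemma: normal subgroups and rists} and \cref{lemma: rist are not virtually abelian}, which force the kernel of any retraction to be trivial. The torsion-free case is handled by separate structural results: \cref{lemma: normal subgroups when are they retracts} (a nontrivial normal subgroup of a weakly branch group is a virtual retract iff it has finite index), \cref{proposition: normal subgroups are fg} (every nontrivial normal subgroup of a finitely generated branch group is finitely generated), and \cref{lemma: just-infinite VRC} (a just-infinite branch group has (VRC) iff it is torsion). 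The case split is: not just-infinite $\Rightarrow$ a finitely generated normal subgroup of infinite index witnesses failure of (LR); just-infinite and torsion-free $\Rightarrow$ failure of (VRC); torsion present $\Rightarrow$ \cref{lemma: LR with torsion}.
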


The proof of \cref{Theorem: branch not LR} involves several steps. First, we generalize the example of Minasyan in \cite[Example 3.6 and Proposition 3.7]{Ashot} to prove that branch groups with torsion cannot have (LR) in \cref{lemma: LR with torsion}. Secondly, we characterize virtual retracts among normal subgroups of weakly branch groups. Indeed, we show that normal subgroups are virtual retracts if and only if they are of finite index; see \cref{lemma: normal subgroups when are they retracts}. Lastly, we characterize the (VRC) property in just-infinite branch groups in \cref{lemma: just-infinite VRC}, where we show that (VRC) is equivalent to the group being torsion. This concludes the proof of \cref{Theorem: branch not LR}. The remainder of the paper is devoted to an application of our results on virtual retracts to the orbifold topology of complex polynomials.

The first examples of branch groups introduced by Grigorchuk in the 1980s are instances of self-similar groups \cite{GrigorchukBurnside, GrigorchukMilnor}. Self-similar groups were introduced by Nekrashevych in the realm of complex dynamics; see \cite{SelfSimilar} for a detailed exposition. Given a complex rational function $f\in \mathbb{C}(x)$, we say that $f$ is a \textit{Thurston map} or \textit{post-critically finite} if the post-critical set $P_f:=\bigcup_{n\ge 1}f^n(C_f)$ is finite, where $C_f$ is the set of critical points of $f$. The fundamental group $\pi_1(\widehat{\mathbb{C}}\setminus P_f,z)$ is known to act on the tree of preimages $T $ of a point $z\in \widehat{\mathbb{C}}\setminus P_f$, where $\widehat{\mathbb{C}}$ denotes the Riemann sphere. This action induces a subgroup $\mathrm{IMG}(f)\le \mathrm{Aut}~T$ called the \textit{iterated monodromy group of $f$}.

Iterated monodromy groups are a complete combinatorial invariant of branched coverings of the Riemann sphere. They were used by Bartholdi and Nekrashevych in \cite{Thurston} to solve the well-known Hubbard's twisted rabbit problem and study the obstructed cases to which Thurston's criterion does not apply \cite{Douady}. Furthermore, iterated monodromy groups play an important role in classical problems in group theory, providing important examples of groups with exotic growth \cite{Erschler1,Erschler2, GrigorchukMilnor} and amenability properties \cite{RW2,RW1}.

Iterated monodromy groups have further important applications to several problems in arithmetic dynamics, including prime density problems in iterates of rational functions and the proportion of their periodic points over finite fields; compare \cite{Bridy, ABC, JonesAMS, JonesComp, JonesLMS, Juul, Odoni1,Odoni2}. A major recent breakthrough of the first author and Radi in \cite{FPP} shows that a complex polynomial $f\in \mathbb{C}[x]$ exhibits very different behaviour in relation to the above problems depending on whether $f$ is hyperbolic or euclidean in the following sense.

For a post-critically finite rational function  $f\in \mathbb{C}(x)$, its \textit{Thurston orbifold} is $(\widehat{\mathbb{C}},\nu_f)$, where $\nu_f$ is a map assigning to each post-critical point $p\in P_f$ a weight $\nu_f(p)\ge 2$ depending on the local degree of each critical point mapping to $p$ and $\nu_f(z)=1$ for any $z\in \widehat{\mathbb{C}}\setminus P_f$; see \cite{Douady}. The \textit{Euler characteristic} $\chi$ of the orbifold $(\widehat{\mathbb{C}},\nu_f)$ is defined as
$$\chi(\nu_f):=2-\sum_{z\in P_f}\left(1-\frac{1}{\nu_f(z)}\right).$$
A rational function $f\in \mathbb{C}(x)$ is said to be \textit{euclidean} if $\chi(\nu_f)=0$ and \textit{hyperbolic} if $\chi(\nu_f)<0$. 

The Thurston orbifold $(\widehat{\mathbb{C}},\nu_f)$ captures the topology of the post-critical orbits of~$f$. On the other hand, virtual retracts capture to some extent the profinite topology of the iterated monodromy group $\mathrm{IMG}(f)$. As the action of the group $\mathrm{IMG}(f)$ is given by the post-critical dynamics of $f$, we expect virtual retracts to be related to the topology of the Thurston orbifold $(\widehat{\mathbb{C}},\nu_f)$. Since hyperbolic rational maps are expected to yield iterated monodromy groups with weakly branch closures, compare \cite[Conjecture 3]{Hdim} and \cite[Theorem~C]{JorgeAV}, we propose the following conjecture in view of \cref{Theorem: branch not LR}:

\begin{Conjecture}
\label{conjecture: LR IMG}
    Let $f\in \mathbb{C}[x]$ be a post-critically finite rational function. Then, the following are equivalent:
    \begin{enumerate}[\normalfont(i)]
    \item $\mathrm{IMG}(f)$ has $\mathrm{(LR)}$;
    \item $\mathrm{IMG}(f)$ is virtually abelian;
    \item $f$ has a euclidean orbifold.
\end{enumerate}
\end{Conjecture}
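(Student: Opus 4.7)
The plan is to prove the three implications (iii)$\Rightarrow$(ii), (ii)$\Rightarrow$(i), and (i)$\Rightarrow$(iii), with the final one being the substantive content.

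For (iii)$\Rightarrow$(ii), I would invoke the classification of post-critically finite rational maps with euclidean orbifold, which for polynomials leaves only the power maps $z\mapsto z^n$ and the Chebyshev polynomials $\pm T_n$, since Lattès examples are never polynomial. The iterated monodromy groups of these maps are computed explicitly in \cite{SelfSimilar}: one has $\mathrm{IMG}(z^n)\cong \mathbb{Z}$ and $\mathrm{IMG}(\pm T_n)\cong D_\infty$, both of which are virtually abelian.

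For (ii)$\Rightarrow$(i), I would use the general group-theoretic fact that finitely generated virtually abelian groups have property (LR). Explicitly, if $A\le_f G$ is a finite-index abelian subgroup and $H\le G$ is finitely generated, then $H\cap A$ is a finitely generated subgroup of a finitely generated abelian group, hence a direct summand of a finite-index subgroup of $A$ (using that free quotients of $\mathbb Z^n$ split, absorbing any finite torsion into a further finite-index subgroup). From the induced retraction in $A$ one then extends across the finite quotient $G/A$ to produce a virtual retraction onto $H$ in $G$.

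The main implication (i)$\Rightarrow$(iii) I would attack by contrapositive: assuming $f$ is hyperbolic ($\chi(\nu_f)<0$), I would aim to show that $\mathrm{IMG}(f)$ is not (LR). The strategy is to invoke the expected branch structure of iterated monodromy groups of hyperbolic post-critically finite polynomials, compare \cite[Conjecture~3]{Hdim} and \cite[Theorem~C]{JorgeAV}, and then appeal to \cref{Theorem: branch not LR} to conclude directly. When $\mathrm{IMG}(f)$ is only weakly branch but not branch (as happens already for the Basilica group $\mathrm{IMG}(z^2-1)$), the direct application of \cref{Theorem: branch not LR} fails, and one would instead exploit \cref{lemma: normal subgroups when are they retracts} together with the torsion-based arguments in \cref{lemma: LR with torsion} and \cref{lemma: just-infinite VRC}, locating a specific finitely generated subgroup whose profinite closure obstructs any virtual retraction.

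The main obstacle is establishing either branchness or a sufficient failure of (LR) for $\mathrm{IMG}(f)$ in the general hyperbolic case; this is itself a deep open problem that has so far only been handled in restricted families, including the quadratic case settled in the present paper. A secondary obstacle is controlling the finitely generated subgroups of weakly branch but non-branch iterated monodromy groups finely enough to rule out virtual retractions onto them, since the tools behind \cref{Theorem: branch not LR} rely crucially on branchness rather than weak branchness.
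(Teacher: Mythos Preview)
The statement you are addressing is labelled a \emph{Conjecture} in the paper, and the paper does not claim to prove it in general. What the paper does establish is the chain (iii)$\Rightarrow$(ii)$\Rightarrow$(i), in one paragraph immediately following the conjecture: for $f$ with euclidean orbifold, $\mathrm{IMG}(f)$ is identified with a virtually abelian group of affine transformations of the plane (via the Douady--Hubbard classification), and hence has (LR) by \cref{lem: props retr}(iii). Your argument for these two implications is essentially the same, though your sketch of (ii)$\Rightarrow$(i) is rougher than simply invoking the known fact that finitely generated virtually abelian groups are (LR).

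For (i)$\Rightarrow$(iii) you correctly recognise that this is the open content of the conjecture, and that no general proof is available. The paper confirms it only for quadratic polynomials (\cref{Theorem: LR quadratic}), using the explicit branch structures from \cite{Quadratic} together with \cref{Theorem: branch not LR} in the preperiodic (branch) case and \cref{corollary: normal LR} plus \cref{corollary: G' fg} in the periodic (weakly branch) case. Your outline of the obstacles is accurate, but your proposed strategy in the weakly-branch case does not match what the paper actually does: for periodic quadratic polynomials $\mathrm{IMG}(f)$ is torsion-free, so \cref{lemma: LR with torsion} and \cref{lemma: just-infinite VRC} are inapplicable; the paper instead exhibits a finitely generated normal subgroup of infinite index (namely $\mathrm{IMG}(f)'$, whose finite generation is the content of \cref{lemma: G' fg}) and appeals to \cref{corollary: normal LR}. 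In short, you have not proved the conjecture---nor has the paper---but your diagnosis of where the difficulty lies is sound.
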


Complex rational functions with a euclidean orbifold were completely classified by Douady and Hubbard in \cite{Douady}. For such a rational function $f$, the group $\mathrm{IMG}(f)$ can be identified with a virtually abelian group of affine transformations of the complex plane, and thus it has (LR). 

We confirm \cref{conjecture: LR IMG} for quadratic complex polynomials. Note that complex polynomials with a euclidean orbifold are linearly conjugate (up to a sign) to either the powering map or the Chebyshev polynomial \cite{Douady}:

\begin{Theorem}
\label{Theorem: LR quadratic}
Let $f\in \mathbb{C}[x]$ be a post-critically finite quadratic polynomial. Then, the following are equivalent:
\begin{enumerate}[\normalfont(i)]
    \item $\mathrm{IMG}(f)$ has $\mathrm{(LR)}$;
    \item $\mathrm{IMG}(f)$ is virtually abelian;
    \item $f$ is linearly conjugate (up to a sign) to the powering map $z^2$ or the Chebyshev polynomial $2x^2-1$.
\end{enumerate}
\end{Theorem}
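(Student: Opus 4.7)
The plan is to establish the cycle of implications $(iii) \Rightarrow (ii) \Rightarrow (i) \Rightarrow (iii)$. For $(iii) \Rightarrow (ii)$, I would invoke the standard Nekrashevych-style computation that $\mathrm{IMG}(z^2) \cong \mathbb{Z}$ (realized as the binary adding machine) and $\mathrm{IMG}(2x^2-1) \cong D_\infty$, both virtually abelian; this is also implicit in the remark preceding the theorem via Douady--Hubbard's classification. For $(ii) \Rightarrow (i)$ I would record the standard fact that finitely generated virtually abelian groups have (LR): fixing a finite-index free abelian normal subgroup $A \le_f G$ and a finitely generated $H \le G$, the intersection $H \cap A$ is a direct summand of a finite-index sublattice of $A$ by the structure theorem, and combining this splitting with a choice of section over the finite quotient yields the desired retraction on a finite-index subgroup containing $H$.

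The substantive direction is $(i) \Rightarrow (iii)$, which I would prove by contraposition. Assume $f$ is a PCF quadratic polynomial not linearly conjugate (up to sign) to $z^2$ or $2x^2-1$; by Douady--Hubbard's classification, $f$ then has a hyperbolic Thurston orbifold. The unique finite critical point $c$ of $f$ is either strictly pre-periodic (Misiurewicz case) or periodic of period at least $2$. In the Misiurewicz case, the work of Nekrashevych guarantees that $\mathrm{IMG}(f)$ is a finitely generated branch group, and \cref{Theorem: branch not LR} immediately yields that $\mathrm{IMG}(f)$ does not have (LR).

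The remaining periodic case is the main obstacle: here $\mathrm{IMG}(f)$ is weakly branch but, in general, not branch (the prototypical example being Basilica $= \mathrm{IMG}(z^2-1)$), so \cref{Theorem: branch not LR} does not apply directly. The plan is to exploit the characterization of virtual retracts among normal subgroups of weakly branch groups (\cref{lemma: normal subgroups when are they retracts}): in a weakly branch group every normal virtual retract has finite index. It would thus suffice to produce a finitely generated normal subgroup of infinite index inside $\mathrm{IMG}(f)$, since (LR) would then force it to be a virtual retract, contradicting the lemma. To construct such a subgroup I would work directly with the explicit wreath recursion of $\mathrm{IMG}(f)$ in the periodic case, aiming to express a suitable candidate as the kernel of a homomorphism onto a virtually abelian quotient (for instance coming from the linearization of the recursion along the periodic critical cycle) and then leveraging self-similarity to bound the number of needed generators. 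The hard part will be to verify finite generation of this kernel while simultaneously ensuring that its index remains infinite; this is where the specific combinatorial structure of the periodic post-critical orbit enters in an essential way, and it is conceivable that the argument must be tailored to the periodic quadratic setting rather than derived from a generic weakly-branch fact.
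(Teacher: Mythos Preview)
Your plan matches the paper's proof exactly in structure: the implications $(iii)\Rightarrow(ii)\Rightarrow(i)$ go as you describe, and for $(i)\Rightarrow(iii)$ by contraposition the preperiodic (non-Chebyshev) case is handled via \cref{Theorem: branch not LR} using that $\mathrm{IMG}(f)$ is regular branch (\cref{proposition: branch structures}\textcolor{teal}{(i)}), while the periodic (non-$z^2$) case goes through \cref{corollary: normal LR} by exhibiting a finitely generated normal subgroup of infinite index.

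Where your plan stays vague, the paper is concrete, and you should know what fills the gap. The candidate subgroup is simply $G'=\mathrm{IMG}(f)'$. By Bartholdi--Nekrashevych (\cref{proposition: branch structures}\textcolor{teal}{(ii)}), for periodic $f$ not conjugate to $z^2$ the group $G$ is weakly regular branch over $G'$ and $G/G'$ is non-trivial free abelian, so infinite index is automatic; no separate ``linearization'' construction is needed. Finite generation of $G'$ is the real content, and the paper does not extract it by ad hoc wreath-recursion computations. Instead it observes that a periodic quadratic is never \emph{critically exceptional} (the post-critical set is a single cycle containing the critical value, forcing $\Upsilon_f=\emptyset$), and proves a general lemma (\cref{lemma: G' fg}): whenever $f$ is not critically exceptional and $\mathrm{IMG}(f)$ is weakly regular branch over its derived subgroup, every non-trivial normal subgroup---in particular $G'$---is finitely generated. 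The mechanism is that non-exceptionality guarantees each generator of $G$ arises as a section of some element of $G'$ at a fixed deep level $N$; combined with the branching $G'\times\cdots\times G'\le\psi_N(G')$, this sandwiches $G'$ between a finitely generated subgroup and one over which $G'$ has finitely generated quotient. So your instinct that self-similarity and the periodic combinatorics drive the argument is right, but the clean organizing principle is ``not critically exceptional $+$ weakly regular branch over $G'$'' rather than a case-by-case analysis.
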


To prove \cref{Theorem: LR quadratic}, we use the (weakly) branch structures of the iterated monodromy groups of post-critically finite hyperbolic quadratic complex polynomials obtained by Bartholdi and Nekrashevych in \cite{Quadratic}, and the results on virtual retracts proved in the first half of the present paper.

Combining our technical results on the finite generation of normal subgroups of iterated monodromy groups (\cref{corollary: G' fg})  with recent results of the first author in \cite{JorgeAV} and of the first author together with Garaialde Ocaña and Uria-Albizuri in \cite{JoneOihana}, we deduce the following: for a quadratic hyperbolic complex polynomial, the closure $\overline{\mathrm{IMG}(f)}$ has complete finitely generated Hausdorff spectrum; {see \cite[Corollary C]{JoneOihana} and \cref{corollary: hspec}. This provides further examples which answer the problems of Klopsch \cite[Question 5.4]{KlopschPhD} and Shalev \cite[Problem 17]{ShalevNewHorizons}.

\subsection*{\textit{\textmd{Notation}}} Groups will be assumed to act on the tree on the right so composition will be written from left to right. We shall use exponential notation for group actions on the tree. We write $H\le_f G$ and $H\trianglelefteq_f G$ for a subgroup $H$ of finite index in $G$ (respectively normal and of finite index in $G$). For $N\trianglelefteq G$ and $H\le G$ such that $G=NH$ and $N\cap H = \{1\}$, we will write $G=N\rtimes H = H\ltimes N$. 

\subsection*{Acknowledgements} 

The first author would like to thank his advisors Gustavo A. Fernández-Alcober and Anitha Thillaisundaram for their constant support. The second author would like to thank his advisor Ashot Minasyan for his feedback and support and Lunds Universitet for its warm hospitality while this work was carried out.

\section{Groups acting on rooted trees}
\label{section: groups acting on rooted trees}

In this section, we introduce basic notions in groups acting on rooted trees and prove some well-known lemmata needed in subsequent sections.

\subsection{Subgroups of $\mathrm{Aut}~T$}

Let $T$ be a spherically homogeneous rooted tree, where each vertex in $T$ has at least 2 descendants. Vertices at distance $n\ge 1$ from the root form the \textit{$n$th level $\mathcal{L}_n$} of $T$. For any $v\in T$, we denote by $T_v$ the subtree rooted at the vertex $v$, formed by the vertices whose unique path to the root of $T$ contains $v$, which is again a spherically homogeneous rooted tree. We may identify vertices in $T$ with finite words $v_1v_2\dotsb v_n$ such that $v_1\in T$ and $v_{i+1}\in T_{v_i}$ for each $1\le i \le n-1$.

We write $\mathrm{Aut}~T$ for the group of automorphisms of $T$ fixing the root. Let us consider a subgroup $G\le \mathrm{Aut}~T$. We write $\mathrm{St}_G(n)$ and $\mathrm{st}_G(v)$ for the pointwise stabilizer in $G$ of the $n$th level of $T$ and for the stabilizer in~$G$ of the vertex $v\in T$ respectively. For $G=\mathrm{Aut}~T$, we shall drop the subscript and write $\mathrm{St}(n)$ and $\mathrm{st}(v)$.

For $g\in \mathrm{Aut}~T$ and $v\in T$, we define the \textit{section of $g$ at $v$} as the unique automorphism $g|_v\in \mathrm{Aut}~T_v$ such that 
$$(vw)^g=v^gw^{g|_v}$$
for any $w\in T_v$. The section map $g\mapsto g|_v$ induces a group homomorphism $\varphi_v:\mathrm{st}(v)\to \mathrm{Aut}~T_v$. We write 
$$G_v:=\varphi_{v}(\mathrm{st}_G(v))$$
and call it the \textit{projection of $G$ at $v$}. For each $n\ge 1$, we further define an isomorphism $\psi_n:\mathrm{St}(n)\to \prod_{v\in \mathcal{L}_n} \mathrm{Aut}~T_{v}$ given by
$$g\mapsto (g|_v)_{v\in \mathcal{L}_n}.$$

\subsection{Branch structures}

For $v\in T$, we define the \textit{rigid vertex stabilizer} $\mathrm{rist}_G(v)$ as the subgroup of $G$ consisting of the elements in $G$ which fix every vertex not in~$T_v$. For distinct vertices at the same level, the corresponding rigid vertex stabilizers commute and have trivial intersection; hence, we shall define for each level $n$ the \textit{rigid level stabilizer} $\mathrm{Rist}_G(n)$ as the direct product $$\mathrm{Rist}_G(n):=\prod_{v\in \mathcal{L}_n}\mathrm{rist}_G(v).$$
Note that $\mathrm{Rist}_G(n)$ is a normal subgroup of $G$. A subgroup $G\le \mathrm{Aut}~T$ is said to be \textit{level-transitive} if it acts transitively on every level of $T$. A level-transitive subgroup whose rigid level stabilizers are infinite (or of finite index in $G$) is called \textit{weakly branch} (respectively \textit{branch}). As level-transitive groups are infinite, branch groups are clearly weakly branch. Branch and weakly branch groups were introduced by Grigorchuk in \cite{Branch}.

We will be using the following standard lemma due to Grigorchuk; see the proof of \cite[Theorem 4]{NewHorizonsGrigorchuk}. We provide a short proof for the convenience of the reader:

\begin{lemma}
\label{lemma: normal subgroups and rists}
    Let $G\le \mathrm{Aut}~T$ and a non-trivial normal subgroup $1\ne N\trianglelefteq G$. Then, for any vertex $v$ moved by $N$, we have  $N\ge \prod_{g\in G}\mathrm{rist}_G(v^g)'$.
\end{lemma}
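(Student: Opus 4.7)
The plan is to exploit an element $n \in N$ that moves $v$, producing commutators in $\mathrm{rist}_G(v)$ out of conjugation by $n$. First I fix such an $n$ and note that $v$ and $v^n$ are two distinct vertices at the same level of $T$, so their subtrees $T_v$ and $T_{v^n}$ are disjoint. In particular, for any $y \in \mathrm{rist}_G(v)$, the conjugate $y^n = n^{-1}yn$ lies in $\mathrm{rist}_G(v^n)$ (it lies in $G$ because $y, n \in G$, and it acts trivially outside $T_{v^n}$), and therefore commutes with every element of $\mathrm{rist}_G(v)$.

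The heart of the argument is then a short commutator computation. For $x, y \in \mathrm{rist}_G(v)$, define
\[
n' := [n, y^{-1}] = n^{-1} y n y^{-1} = y^n y^{-1}.
\]
Since $N \trianglelefteq G$, we have $n' \in N$, and because the factor $y^n$ commutes with $x$, the expansion of $[n', x]$ collapses to
\[
[n', x] \;=\; y\,(y^n)^{-1} x^{-1} y^n y^{-1} x \;=\; y x^{-1} y^{-1} x \;=\; [y^{-1}, x].
\]
Thus $[y^{-1}, x] \in N$ for all $x, y \in \mathrm{rist}_G(v)$, and substituting $y \mapsto y^{-1}$ we obtain $\mathrm{rist}_G(v)' \le N$.

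To finish, I conjugate by an arbitrary $g \in G$: since $N$ is normal and $\mathrm{rist}_G(v)^g = \mathrm{rist}_G(v^g)$, we get $\mathrm{rist}_G(v^g)' \le N$ for every $g \in G$. Distinct vertices $v^{g_1} \ne v^{g_2}$ lie at the same level and so have disjoint subtrees; hence the corresponding rigid vertex stabilisers commute with pairwise trivial intersection. Consequently the subgroup they generate is the internal direct product $\prod_{g \in G} \mathrm{rist}_G(v^g)'$, which is therefore contained in $N$.

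There is no substantive obstacle beyond careful bookkeeping: the only thing one must verify is that $y^n$ genuinely lies in $\mathrm{rist}_G(v^n)$ (not merely in $\mathrm{rist}(v^n) \le \mathrm{Aut}~T$), which is automatic from $y, n \in G$, and that the commutator identities are applied consistently with the right-action convention ($y^n = n^{-1}yn$, $[a,b] = a^{-1}b^{-1}ab$).
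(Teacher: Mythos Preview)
Your proof is correct and is essentially the same as the paper's. The paper writes the key identity as $[a,b]=[[g,a],b]$ for $g\in N$ and $a,b\in\mathrm{rist}_G(v)$; with the relabelling $g=n$, $a=y^{-1}$, $b=x$ this is exactly your computation $[[n,y^{-1}],x]=[y^{-1},x]$, and the passage to $\prod_{g\in G}\mathrm{rist}_G(v^g)'$ via normality of $N$ is identical.
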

\begin{proof}
    Let $a,b\in \mathrm{rist}_G(v)$ for some vertex $v\in T$ moved by an element $g\in N$. Then
    $$[a,b]=[[g,a],b]\in [[G,N],G]\le N,$$
    because $a^g$ commutes with both $a$ and $b$, as $g$ moves $v$ so $\mathrm{rist}_G(v)\cap \mathrm{rist}_G(v^g)=1$. Now, as $N$ is normal in $G$ we further get
    \begin{align*}
        N\ge (\mathrm{rist}_G(v)')^G&=\prod_{g\in G}\mathrm{rist}_G(v^g)'.\qedhere
    \end{align*}
\end{proof}

We also need the following slight generalization of a well-known result:

\begin{lemma}
    \label{lemma: rist are not virtually abelian}
    Let $G\le \mathrm{Aut}~T$ be a group such that $\mathrm{rist}_G(v)\ne 1$ for every $v\in \widetilde{T}$, for some infinite subtree $\widetilde{T}\subseteq T$. Then $\mathrm{rist}_G(v)$ is not virtually abelian.
\end{lemma}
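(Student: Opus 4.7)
The plan is to show that no finite-index subgroup of $H:=\mathrm{rist}_G(v)$ has non-trivial centre, which precludes $H$ being virtually abelian: indeed, if $H$ had an abelian subgroup $A$ of finite index, replacing $A$ by its normal core we may take $A\trianglelefteq H$, and then the centralizer $K:=C_H(A)\supseteq A$ would be a finite-index subgroup containing the non-trivial $A$ in its centre. So it suffices to rule out non-trivial central elements in every $K\le_f H$.

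By K\"onig's Lemma applied to the infinite subtree $\widetilde{T}$, we may pass to the refinement
\[\widetilde{T}^*:=\{u\in\widetilde{T}: \widetilde{T}\cap T_u\ \text{is infinite}\},\]
still an infinite subtree. For $u\in\widetilde{T}^*$, picking pairwise incomparable descendants $u_1,u_2,\ldots\in \widetilde{T}\cap T_u$ produces the internal direct sum $\bigoplus_i\mathrm{rist}_G(u_i)\le\mathrm{rist}_G(u)$; in particular $\mathrm{rist}_G(u)$ is infinite. Now fix $v\in\widetilde{T}^*$, a finite-index subgroup $K\le_f H$, and suppose $1\ne z\in Z(K)$. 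In the generic case that $z$ moves some vertex $u\in\widetilde{T}^*\cap T_v$, the intersection $\mathrm{rist}_G(u)\cap K$ is finite-index in the infinite group $\mathrm{rist}_G(u)$ and therefore non-trivial; for any $1\ne g$ in it, $g$ and $g^z$ are supported respectively in the disjoint subtrees $T_u$ and $T_{u^z}$, so $g^z\ne g$, contradicting $z\in Z(K)$.

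The principal obstacle is the remaining degenerate case in which $z$ fixes $\widetilde{T}^*\cap T_v$ pointwise. For the specific $z$ arising from the virtual-abelianness assumption (namely, $z$ chosen in the abelian normal core $A$), we apply \cref{lemma: normal subgroups and rists} to $H$ acting on $T_v$: the non-trivial normal subgroup $A\trianglelefteq H$ must contain $\mathrm{rist}_H(u)'$ for some moved vertex $u$, and combining this with the rich direct-sum structure of $\mathrm{rist}_G$ on $\widetilde{T}^*$ allows us to produce an element of $A$ whose support genuinely meets $\widetilde{T}^*\cap T_v$, reducing to the generic case. Carrying out this last reduction cleanly is the technical crux of the proof.
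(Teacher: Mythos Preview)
Your generic case is correct and is exactly the mechanism behind the paper's proof (which quotes \cite[Lemma~2.17]{MaximalDominik}): if $1\ne g\in K$ moves some vertex $w$ with $\mathrm{rist}_K(w)\ne 1$, then any $1\ne h\in\mathrm{rist}_K(w)$ gives $[g,h]\ne 1$, so $K$ is non-abelian. The paper runs this directly to show $K'\ne 1$ for every $K\le_f\mathrm{rist}_G(v)$, without the detour through centres; your reduction to ``no finite-index subgroup has non-trivial centre'' is fine but unnecessary.

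The genuine gap is your degenerate case, and the sketch you give via \cref{lemma: normal subgroups and rists} cannot be completed. Indeed, the lemma as you read it (with $\widetilde{T}$ an arbitrary infinite rooted subtree) is \emph{false}: on the binary tree take $\widetilde{T}=\{0^n:n\ge 0\}$ and let $g_n$ swap the two children of $0^n1$; then $G:=\langle g_n:n\ge 0\rangle\cong\bigoplus_n\mathbb{Z}/2$ is abelian while $g_n\in\mathrm{rist}_G(0^n)\setminus\{1\}$ for every $n$. In this example your appeal to \cref{lemma: normal subgroups and rists} yields only $A\ge\mathrm{rist}_H(w)'$ for vertices $w$ outside $\widetilde{T}$, where $\mathrm{rist}_H(w)$ is trivial, so nothing is gained.

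What makes the paper's argument go through is that, in every application (and implicitly in its proof), $\widetilde{T}$ is a \emph{full} subtree $T_v$. Then $\widetilde{T}^*=T_v$, and any non-trivial element of $K\le\mathrm{rist}_G(v)$ necessarily moves a vertex of $T_v=\widetilde{T}^*$, so your degenerate case simply never occurs. Under that reading your generic-case argument already suffices and coincides with the paper's; the extra machinery you propose for the degenerate case should be dropped, and the hypothesis on $\widetilde{T}$ clarified accordingly.
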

\begin{proof}
    First, note that $\mathrm{rist}_G(u)\ne 1$ for every $u\in \widetilde{T}_v$ implies that $\mathrm{rist}_G(v)$ is infinite. Thus, if we consider $K\le_f \mathrm{rist}_G(v)$ of finite index, we still have
    $$\mathrm{rist}_K(u)=\mathrm{rist}_G(u)\cap K\ne 1$$
    for every $u\in \widetilde{T}_v$. Now, arguing as in \cite[Lemma 2.17]{MaximalDominik}, we get 
    $$\mathrm{rist}_K(v)'\ne 1,$$
    as the proof in \cite[Lemma 2.17]{MaximalDominik} does not use the level-transitivity of~$K$. Indeed, it only uses that every rigid vertex stabilizer of $K$ (in $\widetilde{T}$) is non-trivial. Therefore $\mathrm{rist}_G(v)$ is not virtually abelian.
\end{proof}

\subsection{Self-similarity}

Let us assume now that $T$ is the $d$-adic tree for some $d\ge 2$, i.e. the regular rooted tree of degree $d$. We say that $G\le \mathrm{Aut}~T$ is \textit{self-similar} if $g|_v\in G$ for every $g\in G$ and $v\in T$, and \textit{fractal} if $G$ is self-similar, level-transitive and $G_v=G$ for every $v\in T$. For self-similar groups, we have a stronger notion of (weak) branchness. We say that a self-similar, level-transitive group $G$ is \textit{weakly regular branch} if $G$ contains a non-trivial \textit{branching subgroup} $K$, i.e. a subgroup $K$ such that
$$\psi_1(\mathrm{St}_K(1))\ge K\times\dotsb \times K.$$
If $K$ is of finite index, then we say that $G$ is \textit{regular branch}.

\section{(LR) property in weakly branch groups}

First, we generalize a construction of Minasyan in \cite{Ashot} to show that branch groups with torsion do not have property (LR); see the definition below. Then, we characterize virtual retractions among normal subgroups and discuss finite generation of normal subgroups of branch and weakly branch groups. 

\subsection{LR property}

	Let $G$ be a group and let $H\le G$ be a subgroup. Let $K$ be a finite-index subgroup of $G$ that contains $H$. If there exists a homomorphism $f\colon K \to H$ which restricts to the identity on $H$, we say that $H$ is a \emph{retract} of $K$ and a \emph{virtual retract} of~$G$. We write $ H\vr G$ when $H$ is a virtual retract of $G$.

    One can show that $H$ is a retract of $K$ if and only if $ K =  N\rtimes H $ for some $N\trianglelefteq K$ such that $N\cap H =\{1\}$. Indeed, one can take $N:=\ker f$ for the corresponding homomorphism $f:K\to H$. We will use this fact throughout the paper without reference.
    
    We say that a group $G$ has \textit{property (LR)} if every finitely generated subgroup of~$G$ is a virtual retract of $G$.

We recall some relevant statements concerning retracts in groups:
\begin{lemma}\label{lem: props retr} Let $G$ be a group.
    \begin{itemize}
        \item[(i)] Suppose that $H\le G$ and $\varphi\colon G \to \Gamma$ is a homomorphism, injective on $H$ and with $\varphi(H) \vr  \Gamma$. Then $H\vr G$. In particular, if $H\le K\le G$ and $H\vr G$, then $H\vr K$; see \cite[Lemma~3.2.(ii)]{Ashot}. 
        \item[(ii)] If $G$ is residually finite, then $B\vr G$ for all finite subgroups $B$ of $G$; see \cite[Lemma~3.4]{Ashot}.
        \item[(iii)] If $G$ is finitely generated and virtually abelian, then $G$ has (LR); see \cite[Corollary~4.3]{Ashot}.
    \end{itemize}
\end{lemma}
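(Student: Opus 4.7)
The plan is to prove the three items in order, building each on the previous when possible, and reducing the virtually abelian case to the free abelian case via the pull-back principle established in (i).

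For (i), I would argue by pulling back the virtual retraction from $\Gamma$ to $G$. Fix a finite-index subgroup $L \le_f \Gamma$ with $\varphi(H) \le L$ and a retraction $r\colon L \to \varphi(H)$. Set $K := \varphi^{-1}(L)$, which is a finite-index subgroup of $G$ containing $H$. Because $\varphi$ is injective on $H$, the restriction $\varphi|_H\colon H\to \varphi(H)$ is an isomorphism, so we may define
\[
f := (\varphi|_H)^{-1}\circ r \circ (\varphi|_K)\colon K\to H.
\]
A direct check on $h\in H$ gives $f(h) = (\varphi|_H)^{-1}(r(\varphi(h))) = (\varphi|_H)^{-1}(\varphi(h)) = h$, so $H\vr K \vr G$. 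The \emph{in particular} clause is the case $\varphi\colon K\hookrightarrow G$: the inclusion is injective on $H$ and $\varphi(H) = H \vr G$, hence $H\vr K$.

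For (ii), I would use residual finiteness to separate the non-trivial elements of $B$ from the identity. For each $b\in B\setminus\{1\}$ choose $N_b\trianglelefteq_f G$ with $b\notin N_b$; set $N := \bigcap_{b\in B\setminus\{1\}} N_b$. Since $B$ is finite, this is a finite intersection of finite-index normal subgroups, hence $N\trianglelefteq_f G$ and $N\cap B = \{1\}$. Therefore $K := NB \le_f G$ decomposes as $K = N\rtimes B$, and the projection $K\to B$ is a retraction of $K$ onto $B$, showing $B\vr G$.

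For (iii), I would first treat the finitely generated abelian case and then bootstrap. If $A\cong \mathbb{Z}^n \oplus T$ with $T$ finite and $H\le A$ is finitely generated, then the torsion subgroup of $H$ is finite and, by (ii) applied inside $A$, it is virtually a retract; the free part of $H$ is a pure subgroup of $\mathbb{Z}^n$ after multiplication by a suitable integer, so there is a finite-index subgroup of $A$ which splits as $H_{\mathrm{free}}\oplus (\text{complement})$ up to a finite torsion summand handled by (ii). Combining gives $H\vr A$, so $A$ has (LR). For the general case, let $A\trianglelefteq_f G$ be abelian; since $G$ is finitely generated, $A$ is too. Given any finitely generated $H\le G$, the subgroup $H\cap A$ has finite index in $H$ and sits inside $A$, and one produces a finite-index $K\le_f G$ realising the required retraction by combining the abelian retraction for $H\cap A\vr A$ (via~(i)) with the finite quotient $H/(H\cap A)\hookrightarrow G/A$, invoking (ii) to handle the finite piece.

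The main obstacle is item (iii): the abelian case is essentially the structure theorem, but pushing to the virtually abelian case requires careful bookkeeping so that the abelian retraction and the finite-group retraction coming from (ii) are compatible on the common finite-index intersection, which is why the pull-back tool in (i) is essential in the final assembly.
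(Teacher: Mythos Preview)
The paper does not prove this lemma at all; each item is simply attributed to \cite{Ashot}. So there is no argument in the paper to compare against, and your proofs of (i) and (ii) are correct, standard, and exactly what one finds in the cited reference.

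Item (iii), however, has a real gap in the passage from abelian to virtually abelian. You assert that one ``combines'' the retraction $H\cap A \vr A$ with the embedding $H/(H\cap A)\hookrightarrow G/A$ using (i) and (ii). But (i) needs a homomorphism that is injective on $H$: the quotient $G\to G/A$ kills $H\cap A$, and the inclusion $A\hookrightarrow G$ does not contain $H$, so neither map qualifies. More concretely, from $H\cap A\vr G$ and $|H:H\cap A|<\infty$ you cannot formally deduce $H\vr G$; a finite-index subgroup of a virtual retract is not automatically a virtual retract.

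The missing idea is an $H$-invariant complement. Conjugation gives an action of $H$ on the free abelian group $A$ factoring through the finite group $H/(H\cap A)$, so by Maschke's theorem over $\mathbb{Q}$ the subspace $(H\cap A)\otimes\mathbb{Q}$ has an $H$-invariant complement $W$ in $A\otimes\mathbb{Q}$. Setting $B':=W\cap A$, one checks that $B'$ is $H$-invariant, $(H\cap A)\cap B'=0$, and $(H\cap A)\oplus B'$ has finite index in $A$. Then $K':=HB'$ contains $(H\cap A)\oplus B'$, hence $K'\le_f G$; moreover $B'\trianglelefteq K'$ and $H\cap B'=\{1\}$, so $K'=B'\rtimes H$ and $H\vr G$. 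This equivariance step is precisely what your ``combining'' hides. (Your abelian paragraph is also imprecise---the free part of $H$ is not literally pure after scaling---but that is easily fixed via the pure closure; the substantive gap is the one above.)
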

\subsection{Branch groups with torsion}

We use a similar construction as Minasyan in \cite[Example~3.6 and Proposition~3.7]{Ashot} to prove that branch groups with torsion elements do not have (LR):

\begin{lemma}
\label{lemma: LR with torsion}
Let $G\le \mathrm{Aut}~T$ be a finitely generated branch group, which contains a torsion element. Then $G$ does not have $\mathrm{(LR)}$. 
\end{lemma}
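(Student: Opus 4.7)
The plan is to adapt Minasyan's construction from \cite[Example~3.6 and Proposition~3.7]{Ashot} to the branch setting. After replacing $t$ by an appropriate power one may assume that $t$ has prime order $p$. The aim is to produce a finitely generated subgroup of the form $H=\langle t\rangle\times L$ with $L$ non-abelian finitely generated, and to derive a contradiction from the assumption that $H$ is a virtual retract. For the direct product structure to hold, $t$ must centralize $L$; I would arrange this by ensuring $t\in\mathrm{rist}_G(v_0)$ for some vertex $v_0\in T$, replacing $t$ if necessary by a related torsion element extracted from $\mathrm{rist}_G(v_0)'$, which is non-trivial by \cref{lemma: rist are not virtually abelian} and contained in $\langle t\rangle^G$ by \cref{lemma: normal subgroups and rists}. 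Once $t\in\mathrm{rist}_G(v_0)$, its support is contained in $T_{v_0}$, so $t$ centralizes $\mathrm{rist}_G(v)$ for every vertex $v\notin T_{v_0}$. Applying \cref{lemma: rist are not virtually abelian} to such a $v$, I choose non-commuting elements $a,b\in\mathrm{rist}_G(v)$ and set $L:=\langle a,b\rangle$, $H:=\langle t\rangle\times L$.

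Suppose for contradiction that $H$ is a virtual retract of $G$, with retraction $f\colon K\to H$ where $K\le_f G$, $H\le K$ and $K=\ker f\rtimes H$. Write $M:=\langle t\rangle^K$ for the normal closure of $t$ in $K$. The crux of the argument is a comparison of two computations of $L'$. On the one hand, by choosing $v$ at a level deep enough that its $K$-orbit contains a vertex moved by $t$ (achievable since $t$ moves many vertices in $T_{v_0}$ at large levels and $K$-orbits on level $n$ grow as $n\to\infty$ compared to $[G:K]$), the vertex $v$ is moved by $M$, and \cref{lemma: normal subgroups and rists} applied to $M$ in $K$ gives
\[
L'\le \mathrm{rist}_K(v)'\le M.
\]
On the other hand, since $t$ is central in $H$, we have $f(M)=\langle t\rangle^{f(K)}=\langle t\rangle^{H}=\langle t\rangle$. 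As $L'\le H$ and $f|_H=\mathrm{id}_H$, it follows that $L'=f(L')\le\langle t\rangle$. The direct product structure forces $L'\le L\cap\langle t\rangle=\{1\}$, contradicting the non-abelianness of $L$.

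The main obstacle is the interlocked choice of $t$, of the vertices $v_0$ and $v$, and of the finite-index subgroup $K$, so that all the structural conditions are satisfied simultaneously: $t\in\mathrm{rist}_G(v_0)$ is torsion of prime order, $v$ lies outside $T_{v_0}$ at a level deep enough that its $K$-orbit meets the support of some $K$-conjugate of $t$, and $\mathrm{rist}_G(v)$ contains the required non-commuting pair. Overcoming this requires exploiting both the level-transitive branch structure of $G$ and the finiteness of $[G:K]$; in particular, the extraction of a torsion element lying in a rigid vertex stabilizer and the control of $K$-orbits at deep levels are the two technical points that need to be handled carefully.
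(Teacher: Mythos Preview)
Your plan has two genuine gaps, and the paper's argument avoids both by a different construction.

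\textbf{Gap 1: finding torsion inside a proper rigid stabiliser.} You propose to ``arrange $t\in\mathrm{rist}_G(v_0)$'' by replacing $t$ with a torsion element extracted from $\mathrm{rist}_G(v_0)'$. But \cref{lemma: normal subgroups and rists} only gives $\mathrm{rist}_G(v_0)'\le\langle t\rangle^{G}$; it says nothing about $\mathrm{rist}_G(v_0)'$ containing torsion. In a general branch group there is no reason why a proper rigid stabiliser must contain a non-trivial element of finite order, and you give no argument for this. Note also that for $t$ to centralise $\mathrm{rist}_G(v)$ one really needs $t$ to act trivially on all of $T_v$ (since $\varphi_v(\mathrm{rist}_G(v))$ has finite index in the level-transitive group $G_v$, hence trivial centraliser in $\mathrm{Aut}\,T_v$); this is strictly stronger than $t$ merely lying in some rigid stabiliser, and is again unjustified.

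\textbf{Gap 2: the circular choice of $v$ and $K$.} Even granting a torsion $t\in\mathrm{rist}_G(v_0)$, the vertex $v$ must be fixed before $H$, and $K$ is produced only after $H$ via the hypothetical retraction. Your sentence ``choosing $v$ at a level deep enough that its $K$-orbit contains a vertex moved by $t$'' therefore makes no sense: $K$ depends on $v$, and there is no uniform bound on $[G:K]$ as $v$ varies. So you cannot guarantee that the $K$-orbit of $v$ meets the support of $t$, and the key inclusion $L'\le\mathrm{rist}_K(v)'\le M$ is unproven.

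The paper sidesteps both problems. It does \emph{not} place the torsion element $g$ inside a rigid stabiliser. Instead it takes a $g$-orbit $\mathcal{O}=\{v,v^{g},\dots,v^{g^{p-1}}\}$ of length $p$, forms the diagonal
\[
H=\Bigl\{\prod_{i=0}^{p-1}x^{g^{i}}\;\Big|\;x\in\mathrm{rist}_G(v)\Bigr\}\cong\mathrm{rist}_G(v),
\]
which is finitely generated because $G$ is branch, and sets $A=\langle H,g\rangle$. The contradiction is then obtained \emph{inside the fixed ambient subgroup} $L=\bigl(\prod_{u\in\mathcal{O}}\mathrm{rist}_G(u)\bigr)\rtimes\langle g\rangle$: one shows that the complementary normal subgroup $N$ in any decomposition $K=N\rtimes A$ with $K\le_f L$ must be trivial (using \cref{lemma: normal subgroups and rists} and \cref{lemma: rist are not virtually abelian}), whence $H\le_f L$, which forces $H\cap\mathrm{rist}_G(v)\neq 1$ --- impossible for the diagonal. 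There is no circularity because $L$ and $H$ are fixed from the start and the argument never needs to compare orbits of an unknown $K\le_f G$.
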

\begin{proof}
     As $G$ contains a torsion element, there exists $g\in G$ of order $p$ for some prime $p\ge 2$. Then, there is a vertex $v\in T$ moved by $g$ whose $g$-orbit is precisely
    $$
    \mathcal{O}:=\{v,v^g,\dotsc v^{g^{p-1}}\}.
    $$
    In particular, the $g$-orbit of $v$ is of length $p$. Let us consider the subgroups
    $$
    R:=\prod_{u\in \mathcal{O}}\mathrm{rist}_G(u)
    $$
    and 
    $$
    H:=\left\{\prod_{i=0}^{p-1}x^{g^i}\mid x\in \mathrm{rist}_G(v)\right\}\le R.
    $$
    Observe that $H\cong \mathrm{rist}_G(v)$ is finitely generated. Indeed, as $\mathrm{Rist}_G(n)$ is of finite index in $G$ for every $n\ge 1$ and 
    $$
    \mathrm{Rist}_G(n)=\prod_{u\in \mathcal{L}_n}\mathrm{rist}_G(u),
    $$
    we have that $\mathrm{rist}_G(u)$ must be finitely generated for every $u\in T$.

    Now, let us define the finitely generated subgroup
    $$
    A:=\langle H, g\rangle,
    $$
     where $H\le_f A$ as $A = H \rtimes \langle g\rangle$.
    We proceed by proving that, if $A$ is virtual retract of $G$, we reach a contradiction, and as $A$ is finitely generated, this yields that $G$ does not have (LR). If $A\vr G$, then $$
    A\vr  L \coloneq \left( \prod_{u\in \mathcal{O}}\mathrm{rist}_G(u) \right) \rtimes   \langle g \rangle= R \rtimes \langle g \rangle \le  G,
    $$ 
    by \cref{lem: props retr}\textcolor{teal}{(i)}. Thus, there exist $K \le_f L$ and a normal subgroup $ N\trianglelefteq K$ such that $K = NA$ and $N \cap A = \{1\}$.
 
     We claim that $N$ must be trivial. Indeed, as $N$ is normal in $K$, then if $N$ is not trivial, \cref{lemma: normal subgroups and rists} yields that $N\ge \prod_{g\in K}\mathrm{rist}_K(w^g)'$ for any vertex $w$ moved by $N$. As $R$ is normal and of finite index in $L$, we may assume without loss of generality that $N\le R$. Then, the vertex $w$ may be taken to be a descendant of some $u=v^{g^j}\in \mathcal{O}$. 
   
    If $x\in \mathrm{rist}_G(u)\cap N$, then 
    $$\prod_{i=0}^{p-1}(x^{g^{-j}})^{g^i}\in N\cap A =\{1\} $$
    as $A$ normalizes $N$ and $g\in A$. Therefore $\mathrm{rist}_G(u)\cap N = \{1\}$. However
    $$
        \mathrm{rist}_G(u)\cap N\ge \mathrm{rist}_K(w)'\ne 1
    $$
    as $L$ (and thus any finite-index subgroup of $L$ such as $K$) satisfies the assumption in \cref{lemma: rist are not virtually abelian} on the subtree $T_u$. Therefore $N$ must be trivial and 
    $$H\le_f A = K \le_f L= \left(\prod_{u\in \mathcal{O}}\mathrm{rist}_G(u) \right)\rtimes \langle g \rangle.$$
    In particular, $H\cap\mathrm{rist}_G(v)\ne \{1\}$ since
    $$H\cap\mathrm{rist}_G(v) \le_f \mathrm{rist}_G(v)$$
    and $\mathrm{rist}_G(v)$ is infinite. But $H$ does not intersect $\mathrm{rist}_G(v)$ by construction.
   
\end{proof}

The proof of \cref{lemma: LR with torsion} only works if $G$ contains torsion and for each $v\in T$ there is a finitely generated subgroup $N_v\le \mathrm{rist}_G(v)$ normal in $\mathrm{st}_G(v)$. Indeed, if $G$ does not contain an element of finite order, then the subgroup $H$ is not normalized by~$g$ in the proof of \cref{lemma: LR with torsion}. A natural class of groups to consider is fractal weakly regular branch groups, as then $1\ne N\le \varphi_v(\mathrm{rist}_G(v))\le G$ for every $v\in T$ for~$N$ the maximal normal branching subgroup \cite[Lemma 1.3]{Normal}. However, if $G$ is not regular branch, then $N$ is of infinite index and it may not be finitely generated. We shall see that if $N$ is finitely generated, then we do not need $G$ to contain torsion anymore. In particular, this will allow us to study the (LR) property in torsion-free weakly branch groups.

\subsection{Virtual retracts of normal subgroups}

Let us characterize when the normal subgroups of a weakly branch group are virtual retracts:

\begin{lemma}
\label{lemma: normal subgroups when are they retracts}
    Let $G\le \mathrm{Aut}~T$ be a weakly branch group and $1\ne N\trianglelefteq G$ a non-trivial normal subgroup of $G$. Then $N\vr G$ if and only if $N\le_{\mathrm{f}}G$.
\end{lemma}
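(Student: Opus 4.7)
The backward implication is immediate: if $N \le_f G$, taking $K = N$ and $f = \mathrm{id}_N$ witnesses $N \vr G$. For the forward implication, my plan is to unpack the definition to obtain a finite-index subgroup $N \le K \le_f G$ and a normal complement $M \trianglelefteq K$ with $K = M \rtimes N$ and $M \cap N = \{1\}$, and then to show that $M$ must be trivial. Once this is done, $N = K \le_f G$.

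To force $M = 1$, I would argue by contradiction, assuming $M \ne 1$, and apply \cref{lemma: normal subgroups and rists} twice: once to $N \trianglelefteq G$ and once to $M \trianglelefteq K$, to produce a common non-trivial subgroup inside $M \cap N$. First, by normality of $N$ in $G$ together with level-transitivity of $G$, if some element of $N$ moves a vertex at level $n_0$, then $N$ moves every vertex at level $n_0$, and by descending to subtrees $N$ moves every vertex at every level $n \ge n_0$. Applying \cref{lemma: normal subgroups and rists} to $N \trianglelefteq G$ at every such vertex then gives $N \ge \mathrm{Rist}_G(n)'$ for every $n \ge n_0$. Since $M \ne 1$ likewise moves at least one vertex at every sufficiently deep level, I would choose a vertex $w$ moved by $M$ at some level $n \ge n_0$ and apply \cref{lemma: normal subgroups and rists} to $M \trianglelefteq K$ at $w$ to get $M \ge \mathrm{rist}_K(w)'$. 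Combining these inclusions,
$$\mathrm{rist}_K(w)' \le \mathrm{rist}_G(w)' \le \mathrm{Rist}_G(n)' \le N,$$
so $\mathrm{rist}_K(w)' \le M \cap N = \{1\}$.

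The main obstacle is then to turn this into an honest contradiction by showing $\mathrm{rist}_K(w)' \ne 1$. The key point is that $G$ being weakly branch ensures $\mathrm{rist}_G(v) \ne 1$ for every $v$ in the infinite subtree $T_w$, and since $K$ has finite index in $G$, we also have $\mathrm{rist}_K(v) = \mathrm{rist}_G(v) \cap K \ne 1$ for every $v \in T_w$. Applying \cref{lemma: rist are not virtually abelian} to $K$ on the subtree $T_w$ then yields that $\mathrm{rist}_K(w)$ is not virtually abelian, whence $\mathrm{rist}_K(w)' \ne 1$. This contradicts $\mathrm{rist}_K(w)' \le \{1\}$ and finishes the proof.
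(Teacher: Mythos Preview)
Your proof is correct and follows essentially the same route as the paper: obtain a finite-index $K$ with complement $M$ (the paper's $L$), apply \cref{lemma: normal subgroups and rists} to both $N\trianglelefteq G$ and $M\trianglelefteq K$ to trap a common $\mathrm{rist}_K(w)'$ inside $M\cap N$, and use \cref{lemma: rist are not virtually abelian} together with $|G:K|<\infty$ to see this subgroup is non-trivial. The only cosmetic differences are that you phrase the contradiction as ``$M=1$, hence $N=K$'' rather than ``$N$ of infinite index forces $L\ne 1$'', and that since $N\trianglelefteq G$ your semidirect product $K=M\rtimes N$ is in fact direct, as the paper writes.
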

\begin{proof}
If $N$ is of finite index, then clearly $N\vr G$. Let $N$ be a non-trivial normal subgroup of infinite index in $G$. Assume by contradiction that $N$ is a virtual retract of $G$. Then, there exist subgroups $L$ and $K$ of $G$ such that $L\trianglelefteq K \le_f G$ and $K= L\times N$  has finite index in $G$. As $N$ is normal in $G$ and  $L$ is normal in $K$, by \cref{lemma: normal subgroups and rists} we have
    $$N\ge \mathrm{Rist}_G(n)'\quad\text{and}\quad L\ge \prod_{g\in K}\mathrm{rist}_K(v^g)'$$
    for some $n\ge 1$ and a vertex $v\in T$ moved by $L$. We may assume without loss of generality that $v$ is at level $n$ or below. Since $K$ is of finite index in $G$ and~$G$ is weakly branch, for any $u\in T$ we have
    $$\mathrm{rist}_K(u)=\mathrm{rist}_G(u)\cap K\ne 1$$
    as $\mathrm{rist}_G(u)$ is infinite. Then, by \cref{lemma: rist are not virtually abelian} we have 
    $$\mathrm{rist}_K(u)'\ne 1$$
    for every $u\in T$. Therefore 
    $$L\cap N\ge \mathrm{Rist}_G(n)'\cap \mathrm{rist}_K(v)'=\mathrm{rist}_K(v)'\ne 1,$$
    which yields a contradiction to $K= N\times L$.
\end{proof}

In particular, \cref{lemma: normal subgroups when are they retracts} yields a sufficient condition for $G$ to not have (LR):

\begin{corollary}
\label{corollary: normal LR}
    Let $G\le \mathrm{Aut}~T$ be a weakly branch group admitting a finitely generated normal subgroup $1\ne N\trianglelefteq G$ of infinite index in $G$. Then $G$ does not have $\mathrm{(LR)}$.
\end{corollary}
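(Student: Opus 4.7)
The plan is to derive the corollary as an immediate consequence of \cref{lemma: normal subgroups when are they retracts}, using the definition of property (LR) and the hypothesis that $N$ is finitely generated.

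Concretely, I would argue by contradiction. Suppose $G$ has (LR). Since $N$ is by assumption a finitely generated subgroup of $G$, the (LR) property guarantees that $N\vr G$. Now I would invoke \cref{lemma: normal subgroups when are they retracts}: since $N$ is a non-trivial normal subgroup of the weakly branch group $G$, the fact that $N\vr G$ forces $N\le_f G$. This contradicts the assumption that $N$ has infinite index in $G$, so $G$ cannot have (LR).

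There is no real obstacle here; the corollary is essentially a repackaging of \cref{lemma: normal subgroups when are they retracts} together with the definition of (LR), and the finite-generation hypothesis on $N$ is only used to ensure that $N$ falls under the class of subgroups controlled by the (LR) property. The content of the proof is entirely carried by the preceding lemma, which in turn relies on \cref{lemma: normal subgroups and rists} and \cref{lemma: rist are not virtually abelian} to exhibit a non-trivial commutator in $L\cap N$ that obstructs any decomposition $K=L\times N$ with $K\le_f G$.
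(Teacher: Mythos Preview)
Your proposal is correct and matches the paper's approach exactly: the corollary is stated immediately after \cref{lemma: normal subgroups when are they retracts} as a direct consequence, with no separate proof given. Your contrapositive argument---finite generation plus (LR) would force $N\vr G$, which by the lemma would make $N$ finite index---is precisely the intended reasoning.
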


\subsection{Finitness properties of normal subgroups}

For a finitely generated branch group, every proper quotient is virtually abelian and hence finitely presented. Thus, every normal subgroup is normally finitely generated. Here, we show that they are all in fact finitely generated. This has been recently been proved by Francoeur, Grigorchuk, Leemann and Nagnibeda in \cite{FGLN}.

\begin{proposition}
\label{proposition: normal subgroups are fg}
    Let $G\le \mathrm{Aut}~T$ be a finitely generated branch group. Then every normal subgroup $1\ne N\trianglelefteq G$ is finitely generated.
\end{proposition}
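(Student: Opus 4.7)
The plan is to sandwich $N$ between a finitely generated piece and the commutator of a rigid level stabilizer, using \cref{lemma: normal subgroups and rists} together with the branch property and level-transitivity.

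First, since $N\ne 1$ there is a vertex $v\in T$, say at level $n$, moved by $N$. By \cref{lemma: normal subgroups and rists},
$$
N\;\supseteq\;\prod_{g\in G}\mathrm{rist}_G(v^g)',
$$
and, as $G$ is branch and hence level-transitive, $v^G=\mathcal L_n$. Therefore $N\supseteq \mathrm{Rist}_G(n)'$.

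Next I would analyse $G/\mathrm{Rist}_G(n)'$. Since $G$ is branch, $\mathrm{Rist}_G(n)\le_{\mathrm{f}} G$, so $\mathrm{Rist}_G(n)$ is finitely generated and hence $\mathrm{Rist}_G(n)/\mathrm{Rist}_G(n)'$ is finitely generated abelian. Consequently $G/\mathrm{Rist}_G(n)'$ is a (finite)-by-(finitely generated abelian) group, i.e.\ polycyclic, and in a polycyclic group every subgroup is finitely generated. In particular $N/\mathrm{Rist}_G(n)'$ is finitely generated, and a finite generating set of $N$ will be obtained by concatenating one of $\mathrm{Rist}_G(n)'$ with lifts of one of $N/\mathrm{Rist}_G(n)'$.

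The main obstacle is therefore to show that $\mathrm{Rist}_G(n)'$ itself is finitely generated. I would exploit the decomposition
$$
\mathrm{Rist}_G(n)'\;\cong\;\prod_{u\in\mathcal L_n}\mathrm{rist}_G(u)',
$$
a finite direct product whose factors are mutually conjugate by the $G$-action, reducing the problem to showing that $\mathrm{rist}_G(u)'$ is finitely generated for a single $u\in\mathcal L_n$. The natural route is to prove that $\mathrm{rist}_G(u)^{\mathrm{ab}}$ is finite, so that $\mathrm{rist}_G(u)'$ has finite index in the already finitely generated group $\mathrm{rist}_G(u)$. Since $\mathrm{rist}_G(u)$ is itself a finitely generated branch group on the subtree $T_u$, this amounts to the claim that every finitely generated branch group has finite abelianization, which in turn can be attacked using the property quoted just before the statement — that every proper quotient of $G$ is virtually abelian — combined with an argument ruling out a surjection $G\twoheadrightarrow\mathbb Z$ via the wreath-product structure of $\mathrm{Rist}_G(n)$ inside $G$. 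This finiteness of abelianization for finitely generated branch groups is the crucial input I expect to be the hardest to justify cleanly.
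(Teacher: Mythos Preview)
Your reduction to showing that $\mathrm{Rist}_G(n)'$ is finitely generated, via the polycyclic quotient $G/\mathrm{Rist}_G(n)'$, is correct and is exactly how the paper begins. The divergence --- and the gap --- is in how you propose to handle $\mathrm{Rist}_G(n)'$ itself.

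Your plan is to show that each $\mathrm{rist}_G(u)$ has finite abelianisation, by viewing $\varphi_u(\mathrm{rist}_G(u))$ as a finitely generated branch group on $T_u$ and then invoking the claim that every finitely generated branch group has finite abelianisation. There are two problems. First, $\varphi_u(\mathrm{rist}_G(u))$ need not be level-transitive on $T_u$: level-transitivity of $G$ only gives that $\varphi_u(\mathrm{st}_G(u))$ is level-transitive, and $\mathrm{rist}_G(u)$ sits inside $\mathrm{st}_G(u)$ as a normal subgroup of \emph{infinite} index, so nothing forces transitivity to descend. Second, and more seriously, the statement ``every finitely generated branch group has finite abelianisation'' is precisely what you flag as the crucial unproved input; your suggested attack (ruling out a surjection $G\twoheadrightarrow\mathbb{Z}$ from the wreath structure of $\mathrm{Rist}_G(n)$) is not carried out, and the obvious attempts --- comparing the images $\phi(\mathrm{rist}_G(v))$ across conjugate rigid stabilisers --- only recover tautologies in the abelian target. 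As written, then, the argument trades the proposition for a structural fact about branch groups that is at least as hard.

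The paper avoids both issues and never claims that $\mathrm{rist}_G(u)'$ has finite index. Instead it sandwiches $\mathrm{Rist}_G(n)'$ between a finitely generated subgroup and $\mathrm{Rist}_G(k)'$ for some deeper level $k>n$. The key step is a commutator trick: if $g\in\mathrm{rist}_G(v)$ moves a descendant $u$ of $v$, then for every $h\in\mathrm{rist}_G(u)$ one has $[g,h]\in\mathrm{rist}_G(v)'$ with $[g,h]|_u=h|_u$, whence
\[
\varphi_u\bigl(\mathrm{rist}_G(v)'\cap\mathrm{st}_G(u)\bigr)\;\supseteq\;\varphi_u(\mathrm{rist}_G(u)).
\]
Since $\mathrm{rist}_G(u)$ is finitely generated and $\mathrm{rist}_G(u)'$ is its normal closure of finitely many commutators, one extracts from this a \emph{finite} subset $X\subseteq\mathrm{Rist}_G(n)'$ with $\langle X\rangle\supseteq\mathrm{Rist}_G(k)'$. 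Then the same polycyclic-quotient observation you used --- now applied at level $k$ --- shows that $\mathrm{Rist}_G(n)'/\mathrm{Rist}_G(k)'$ is finitely generated, and hence so is $\mathrm{Rist}_G(n)'$.
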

\begin{proof}
    We only need to show that $\mathrm{Rist}_G(n)'$ is finitely generated for every $n\ge 1$. Indeed, by \cref{lemma: normal subgroups and rists}, every normal subgroup $1\ne N\trianglelefteq G$ contains $\mathrm{Rist}_G(n)'$ for some $n\ge 1$. As $G/\mathrm{Rist}_G(n)'$ is finitely generated and virtually abelian, the quotient $N/\mathrm{Rist}_G(n)'$ is finitely generated. Thus, if $\mathrm{Rist}_G(n)'$ is finitely generated, we obtain that $N$ is itself finitely generated.

    Therefore, let us prove that $\mathrm{Rist}_G(n)'$ is finitely generated. Let us fix $n\ge 1$ and $v\in \mathcal{L}_n$. Let $u\in T_v$ be a vertex moved by some $g\in\mathrm{rist}_G(v)$ at some level $k\ge n+1$ in $T$. Then, for every $h\in \mathrm{rist}_G(u)\le \mathrm{rist}_G(v)$ we have
    $$[g,h]=({h^{-1}})^{g} h \in \mathrm{st}_G(u).$$
    As $({h^{-1}})^{g}\in \mathrm{rist}_G(u)^g=\mathrm{rist}_G(u^g)$ and $\mathrm{rist}_G(u)\cap \mathrm{rist}_G(u^g)=1$, we get
    $$[g,h]|_u=h|_u.$$
    In other words
    \begin{equation}\label{eq: eq1}
        \varphi_u(\mathrm{rist}_G(v)'\cap \mathrm{st}_G(u))\ge \varphi_u(\mathrm{rist}_G(u)).
    \end{equation}
    
    Now, as $\mathrm{Rist}_G(n)$ are of finite index for all $n\ge 1$, they are finitely generated. Therefore $\mathrm{rist}_G(w)$ is finitely generated for every $w\in T$. Let $R$ be a finite generating set of $\mathrm{rist}_G(u)$. Then $\mathrm{rist}_G(u)'$ is normally finitely generated in $\mathrm{rist}_G(u)$ as
    $$\mathrm{rist}_G(u)'=\langle [r,t]\mid r,t\in R\rangle ^{\mathrm{rist}_G(u)}=\langle [r,t]\mid r,t\in R\rangle^{\langle R\rangle}.$$
    Thus, by \Cref{eq: eq1}, there exists a finite set $A\subseteq \mathrm{rist}_G(v)'$ such that $A\subseteq \mathrm{st}_G(u)$ and $\varphi_u(A) = \varphi_u(R)$. Denoting $S := \{[r,t]\mid r,t\in R\}$, we get
    $$\varphi_u(\mathrm{rist}_G(v)'\cap \mathrm{st}_G(u))\ge \varphi_u(\langle S\rangle^{\langle A\rangle})= \langle \varphi_u(S)\rangle^{\langle \varphi_u(R)\rangle}=\varphi_u(\mathrm{rist}_G(u)').$$
    Injectivity of $\varphi_u$ restricted to $\mathrm{rist}_G(u)$ implies that
    $$\mathrm{rist}_G(v)'\ge \mathrm{rist}_G(v)'\cap \mathrm{st}_G(u)\ge  \langle S\rangle^{\langle A\rangle}=\mathrm{rist}_G(u)'.$$
    As $G$ is level-transitive, for any $z\in \mathcal{L}_k$, there is an element $g_z\in G$ such that $u^{g_z}=z$. Therefore, if we write $X:=\bigcup_{z\in \mathcal{L}_k} S^{g_z}\cup A^{g_z}$, we get
    \begin{align*}
        \mathrm{Rist}_G(n)'&=\prod_{g\in G}\mathrm{rist}_G(v^g)'=\prod_{g\in G}(\mathrm{rist}_G(v)')^g\ge \langle X\rangle\\
        &\ge \prod_{z\in \mathcal{L}_k}(\mathrm{rist}_G(u)')^{g_z}= \prod_{z\in \mathcal{L}_k}\mathrm{rist}_G(u^{g_z})'=\mathrm{Rist}_G(k)'.
    \end{align*}
    Arguing as above, the quotient $\mathrm{Rist}_G(n)'/\mathrm{Rist}_G(k)'$ is finitely generated. Therefore, there is some finite set $Y\subset \mathrm{Rist}_G(n)'$ such that $\mathrm{Rist}_G(n)'=\langle X\cup Y\rangle$, so $\mathrm{Rist}_G(n)'$ is finitely generated.
\end{proof}

\begin{remark}
\label{remark: K}
    The proof of \cref{proposition: normal subgroups are fg} works for weakly branch groups if for every $n\ge 1$, both $\mathrm{Rist}_G(n)$ is finitely generated and every subgroup of $G/\mathrm{Rist}_G(n)$ is finitely generated (equivalently polycyclic). Indeed, then $G/\mathrm{Rist}_G(n)'$ would be polycyclic-by-polycyclic and thus polycyclic itself. In particular, if $G$ is weakly regular branch over a normal subgroup $K$ such that every subgroup of $G/K$ is finitely generated, then every normal subgroup $1\ne N\trianglelefteq G$ is finitely generated in $G$ if and only if $K$ is finitely generated. This follows from the same proof as above with $K$ playing the role of $\mathrm{rist}_G(v)$ for each $v\in T$. Thus, the main difficulty lies in showing that $K$ is finitely generated. We shall address this issue for iterated monodromy groups of periodic quadratic polynomials.
\end{remark}

\subsection{(VRC) property in just-infinite branch groups}

We say that a group $G$ has \textit{property (VRC)} if every cyclic subgroup of $G$ is a virtual retract of $G$.

The following property will be key in the study of the (VRC) property in just-infinite groups:

\begin{lemma}[{see \cite[Corollary~3.8]{J-A}}]
\label{lem:retr_onto_virt_ab->homom}  
    Let $G$ be a finitely generated group, and let $H \vr G$ be a finitely generated virtually abelian subgroup. Then, there is a virtually abelian quotient $G/N$ such that $G \to G/N$ is injective on $H$.
\end{lemma}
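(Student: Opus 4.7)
The strategy is the classical induced representation, applied to the retraction. By $H\vr G$ we have $K=M\rtimes H\le_f G$ and a retraction $\pi\colon K\to H$ with $\ker\pi=M$. The plan is to induce $\pi$ from $K$ up to $G$: since $[G:K]=:n$ is finite, the result is a homomorphism from $G$ into the wreath product $H\wr\mathrm{Sym}(n)$, which is virtually abelian, and it will be injective on $H$ because $H$ fixes the trivial coset.

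Concretely, fix a left transversal $\{t_1=1,t_2,\dots,t_n\}$ of $K$ in $G$. For each $g\in G$, the left action of $g$ on $G/K$ yields a permutation $\sigma_g\in\mathrm{Sym}(n)$ and cocycle elements $k_{g,i}\in K$ determined by $g t_i=t_{\sigma_g(i)}\,k_{g,i}$. Using the standard cocycle identity $k_{gh,i}=k_{g,\sigma_h(i)}\,k_{h,i}$ (together with $\sigma_{gh}=\sigma_g\sigma_h$) and the fact that $\pi$ is a homomorphism on $K$, one checks that
\[
\Phi\colon G\longrightarrow H\wr\mathrm{Sym}(n)=H^n\rtimes\mathrm{Sym}(n),\qquad \Phi(g):=\bigl((\pi(k_{g,1}),\dots,\pi(k_{g,n}));\,\sigma_g\bigr),
\]
is a group homomorphism, once the entries are indexed to match the convention chosen for the wreath-product multiplication. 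The target is virtually abelian, since the base group $H^n$ is a finitely generated virtually abelian group and has index $n!$ in $H\wr\mathrm{Sym}(n)$.

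Injectivity on $H$ is then immediate: any $h\in H\le K$ fixes the trivial coset $t_1 K=K$, so $\sigma_h(1)=1$ and $g t_1=h\cdot 1=1\cdot h$ gives $k_{h,1}=h$. Hence the slot of $\Phi(h)$ corresponding to the fixed coset is $\pi(h)=h$, and so $\Phi(h)=1$ forces $h=1$. Setting $N:=\ker\Phi\trianglelefteq G$, the quotient $G/N$ embeds into $H\wr\mathrm{Sym}(n)$, is therefore virtually abelian, and $G\to G/N$ is injective on $H$ by construction.

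The only technical nuisance is verifying that $\Phi$ is a well-defined homomorphism, which is a routine calculation once the wreath-product multiplication convention is matched to the direction of the cocycle; no real obstacle remains. Conceptually, the lemma is simply the statement that a virtually-abelian virtual retract can always be detected in a virtually abelian quotient, and induction of the retraction is the natural mechanism producing such a quotient.
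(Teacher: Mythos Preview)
The paper does not give its own proof of this lemma; it is simply quoted from \cite[Corollary~3.8]{J-A}. Your argument via the induced representation is correct and is the standard way to establish such a statement: inducing the retraction $\pi\colon K\to H$ up to $G$ produces a homomorphism into $H\wr\mathrm{Sym}(n)$, which is virtually abelian since $H$ is, and the coordinate at the trivial coset recovers $h$ for each $h\in H$. One incidental remark: your proof never uses the finite generation of $H$, only that $H$ is virtually abelian, so you have in fact proved a mild strengthening of the stated lemma.
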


Let us characterize the (VRC) property among just-infinite branch groups:

\begin{lemma}
\label{lemma: just-infinite VRC}
    Let $G\le \mathrm{Aut}~T$ be just-infinite branch group. Then $G$ has $\mathrm{(VRC)}$ if and only if $G$ is torsion.
\end{lemma}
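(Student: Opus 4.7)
The plan is to handle the two directions separately; the easier one being sufficiency and the harder one being necessity, though even that follows by combining earlier lemmata.

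For the direction ``$G$ torsion $\Rightarrow$ (VRC)'': every cyclic subgroup of $G$ is finite, and since $G \le \mathrm{Aut}~T$ is residually finite, \cref{lem: props retr}\textcolor{teal}{(ii)} immediately implies that every cyclic subgroup is a virtual retract of $G$.

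For the converse, I will argue by contradiction. Suppose $G$ has (VRC) but contains an element $g$ of infinite order. Then $H := \langle g\rangle \cong \mathbb{Z}$ is a finitely generated virtually abelian subgroup with $H \vr G$. Applying \cref{lem:retr_onto_virt_ab->homom} yields a virtually abelian quotient $G/N$ such that the natural map $G \to G/N$ is injective on $H$. In particular, the image of $g$ in $G/N$ has infinite order, so $G/N$ is infinite.

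Now I invoke the just-infinite hypothesis: since $G/N$ is an infinite quotient of $G$, we must have $N = \{1\}$, so $G$ itself is virtually abelian. On the other hand, $G$ is a branch group, so for every vertex $v \in T$ the rigid vertex stabilizer $\mathrm{rist}_G(v)$ is non-trivial (as $\mathrm{Rist}_G(n)$ has finite index in the infinite group $G$ and, by level-transitivity, all rigid vertex stabilizers at a fixed level are conjugate, hence infinite). Consequently \cref{lemma: rist are not virtually abelian}, applied with $\widetilde T = T$, ensures that $\mathrm{rist}_G(v)$ is not virtually abelian, and therefore neither is $G$. This contradicts the conclusion of the previous paragraph, completing the proof.

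I do not expect any serious obstacle here: both directions reduce to a direct application of the lemmata already established. The only subtlety worth making explicit is the verification that, in a just-infinite branch group, every rigid vertex stabilizer is infinite, so that \cref{lemma: rist are not virtually abelian} applies and rules out virtual abelianness of $G$.
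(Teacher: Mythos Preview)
Your proof is correct and follows essentially the same route as the paper's: both directions use the identical ingredients (\cref{lem: props retr}\textcolor{teal}{(ii)} for the torsion case, and the combination of \cref{lem:retr_onto_virt_ab->homom} with \cref{lemma: rist are not virtually abelian} and just-infiniteness for the converse). The only cosmetic difference is that the paper phrases the harder direction as a direct contrapositive while you frame it as a proof by contradiction.
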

\begin{proof}
    As $G$ is residually finite, all cyclic subgroups generated by a torsion element are virtual retracts by \cref{lem: props retr}\textcolor{teal}{(ii)}. For the converse, note that if $\langle g\rangle$ is a virtual retract of $G$ then there exists a virtually abelian quotient $G/N$ such that $\langle g\rangle \hookrightarrow G/N$ by \Cref{lem:retr_onto_virt_ab->homom}. Since~$G$ is branch, it is not virtually abelian by \Cref{lemma: rist are not virtually abelian}, so $N$ must be non-trivial. However, as $G$ is just-infinite, $G/N$ must be finite and thus $g$ is torsion.
\end{proof}

As (LR) implies (VRC), \cref{lemma: just-infinite VRC} yields the last piece needed to prove that finitely generated branch groups do not have (LR):

\begin{proof}[Proof of \cref{Theorem: branch not LR}]
    If $G$ is not just-infinite, then $G$ contains a normal subgroup $1\ne N\trianglelefteq G$ of infinite index. Thus, by \cref{proposition: normal subgroups are fg}, the subgroup $N$ is finitely generated and by \cref{corollary: normal LR} the group $G$ does not have (LR). Thus, let us assume that $G$ is just-infinite. Then, if $G$ is not torsion it does not have (VRC) and thus not (LR) either. If $G$ is torsion, then $G$ does not have (LR) by \cref{lemma: LR with torsion}.
\end{proof}

\section{Iterated monodromy groups of quadratic polynomials}
\label{section: iterated monodromy groups}

In this last section, we specialize to iterated monodromy groups. First, we introduce iterated monodromy groups and several of their properties. Then, we study the finite-generation of normal subgroups for periodic polynomials and show an application to the finitely generated Hausdorff spectrum of their closure. Lastly, we prove \cref{Theorem: LR quadratic}.

\subsection{Iterated monodromy groups}

Let $f:\widehat{\mathbb{C}}\to \widehat{\mathbb{C}}$ be a rational function of degree $d\ge 2$ on the Riemann sphere $\widehat{\mathbb{C}}$. For any $z\in \widehat{\mathbb{C}}$, we say that $z$ is \textit{periodic} if there is $n\ge 1$ such that $f^n(z)=z$, and \textit{preperiodic} if $z$ is not periodic and there is $n>m\ge 1$ such that $f^n(z)=f^m(z)$. The map $f$ is said to be \textit{post-critically finite} if every critical point $c\in C_f$ is either periodic or preperiodic. Equivalently $f$ is post-critically finite if the postcritical set
$$P_f:=\bigcup_{n\ge 1}\{f^n(c)\mid c\in C_f\}$$
is finite.  If $f$ is post-critically finite, then $f$ induces an unramified branch covering 
$$f:\widehat{\mathbb{C}}\setminus f^{-1}(P_f)\to \widehat{\mathbb{C}}\setminus P_f.$$
Let $z \in \widehat{\mathbb{C}} \setminus P_f$, a loop $\gamma$ starting and ending at $z$ and $x \in f^{-1}(z)$. Let us consider $\gamma_x$ the unique lift of $\gamma$ starting at $x$. As $f(\gamma_x)=\gamma$, the endpoint of $\gamma_x$ is another point in $f^{-1}(z)$, and we obtain a well-defined action of the fundamental group $\pi_1(\widehat{\mathbb{C}}\setminus P_f,z)$ on $f^{-1}(z)$ via
$$x^{\gamma}:=\gamma_x(1),$$
where $\gamma_x(1)$ denotes the endpoint of $\gamma_x$. This action is called the \textit{monodromy action} of $f$. Similarly, for any $n\ge 1$ we obtain an unramified branch covering
$$f^n:\widehat{\mathbb{C}}\setminus f^{-n}(P_f)\to \widehat{\mathbb{C}}\setminus P_f$$
and a well-defined action of the fundamental group $\pi_1(\widehat{\mathbb{C}}\setminus P_f,z)$ on $f^{-n}(z)$. These actions are coherent, so we obtain an action of the fundamental group $\pi_1(\widehat{\mathbb{C}}\setminus P_f,z)$ by automorphisms on the tree of preimages
$$T:=\{z\}\cup\bigsqcup_{n\ge 1} f^{-n}(z).$$
As $z \notin P_f$, the set $f^{-n}(z)$ has cardinality $d^n$ for each $n\ge 1$. Thus $T$ is the $d$-adic tree.

The quotient of the fundamental group $\pi_1(\widehat{\mathbb{C}}\setminus P_f,z)$ by the kernel of its action on the tree of preimages $T$ above is isomorphic to a self-similar subgroup $\mathrm{IMG(f)}$ of $\mathrm{Aut}~T$. We call the group $\mathrm{IMG}(f)$ the \textit{iterated monodromy group} of $f$. As $P_f$ is finite, the space $\widehat{\mathbb{C}}\setminus P_f$ is path connected and $\mathrm{IMG}(f)$ is independent of the choice of the base point $z\notin P_f$, up to conjugation in $\mathrm{Aut}~T$. The iterated monodromy group is a discrete analogue of the geometric iterated Galois group of $f$, given by the iterated monodromy action of the \'{e}tale fundamental group $\pi_1^{\text{\'{e}t}}(\widehat{\mathbb{C}}\setminus P_f, z)$ on $T$ or, equivalently, by the Galois action of the Galois group $\mathrm{Gal}(\mathbb{C}_\infty(f,t)/\mathbb{C})$ of the splitting field $\mathbb{C}_\infty(f,t)$ of all the iterates of $f$ on the tree of preimages of a transcendental element $t$ over $\mathbb{C}$.

We shall now focus on complex polynomials. Actually, what we will be doing works generally for \textit{topological polynomials}, i.e. post-critically finite branch coverings of the Riemann sphere such that $f^{-1}(\infty)=\{\infty\}$, i.e. $\infty$ is a completely ramified super attracting fixed point for $f$. The obstructions for a topological polynomial with a hyperbolic orbifold to be a complex polynomial are known to be Levy cycles \cite{Douady}. Those topological polynomials with euclidean orbifolds are not obstructed and they are precisely the powering maps and the Chebyshev polynomials; see \cite{Douady} for a complete classification of the rational functions with euclidean orbifolds.

\subsection{Kneading automata}

A \textit{multiset of permutations} is a map $I\to \mathrm{Sym}(X)$, which we denote $\{\sigma_i\}_{i\in I}$. The \textit{cycle diagram}  of $\{\sigma_i\}_{i\in I}$ is the oriented CW-complex whose 0-cells are the elements in $X$, and whose 2-cells are given by (ordered) polygons corresponding to the cycles in the permutations in $\{\sigma_i\}_{i\in I}$. Note that the 2-cells only intersect along the 0-cells. We say that $\{\sigma_i\}_{i\in I}$ is \textit{tree-like} if its cycle diagram is contractible.

A self-similar group $G=\langle S\rangle\le \mathrm{Aut}~T$ is a \textit{kneading automata group} if $G$ is an automata group (i.e. the sections of the generators in $S$ at the first level of $T$ are in $S\cup \{1\}$) such that:
\begin{enumerate}[\normalfont(i)]
    \item for each generator $g\in S$, there is a unique generator $h\in S$ and a unique vertex $v\in \mathcal{L}_1$ such that $h|_v=g$;
    \item for each generator $g\in S$ and every cycle $(v_1\,\dotsb v_r)$ of the action of $g$ on $\mathcal{L}_1$ the section $g|_{v_i}$ is non-trivial for at most one vertex $v_i$ among $v_1,\dotsc,v_r$;
    \item the multiset of permutations induced by the actions of the generators in $S$ on $\mathcal{L}_1$ is tree-like.
\end{enumerate}

A well-known property of kneading automata groups that we shall use is that the product of the generators in any order is a level-transitive element \cite{SelfSimilar}.

Let $f\in \mathbb{C}[x]$ be a complex polynomial of degree $d\ge 2$. The group $\mathrm{IMG}(f)$ has a generating set $\{g_p\mid p\in P_f\}$ indexed by the complex post-critical points. The group $\mathrm{IMG}(f)=\langle g_p\mid p\in P_f\rangle$ is an instance of a kneading automata group \cite{SelfSimilar}. In fact, more is true. For any $p\in P_f$, the non-trivial sections of $g_p$ at the first level are the generators $g_q$ such that $q\in P_f\cap f^{-1}(p)$, and the action of $g_p$ on the first level is given by cycles of length corresponding to the ramification degree of the preimages on $f^{-1}(p)$. The reader is refered to \cite{SelfSimilar} for a detailed exposition in terms of invariant spiders.

\subsection{Structural properties}

Let $f\in \mathbb{C}[x]$ be a quadratic polynomial. Then $f$ has a unique critical point $c$. We say that $f$ is \textit{periodic} if $c$ is periodic and \textit{preperiodic} if $c$ is preperiodic.

We need several structural results for the iterated monodromy groups of quadratic complex polynomials. We shall restate several results for kneading automata groups in \cite{Quadratic} in the language of iterated monodromy groups:

\begin{proposition}[{see \cite[Theorems 3.12 and 4.10 and Proposition 3.13]{Quadratic}}]
\label{proposition: branch structures}
    Let $f\in \mathbb{C}[x]$ be a quadratic polynomial and $G:=\mathrm{IMG}(f)$. Then
    \begin{enumerate}[\normalfont(i)]
        \item if $f$ is preperiodic, either $f$ is linearly conjugate to the Chebyshev polynomial $2z^2-1$ or $G$ is regular branch;
        \item if $f$ is periodic, either $f$ is linearly conjugate to the powering map $z^2$ or $G$ is torsion-free, weakly regular branch over $G'$ and $G/G'$ is (non-trivial) free abelian.
    \end{enumerate}
\end{proposition}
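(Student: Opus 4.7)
The plan is essentially a translation: the proposition is a restatement in IMG language of the classification of quadratic kneading automata groups due to Bartholdi and Nekrashevych. The proof therefore reduces to invoking the cited results from \cite{Quadratic}, once we observe that the generating system $\{g_p\mid p\in P_f\}$ of $\mathrm{IMG}(f)$ is a kneading automaton in their sense, as recalled in the previous subsection.

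For part (i), I would apply \cite[Theorem 3.12]{Quadratic}, which establishes that every iterated monodromy group of a preperiodic quadratic polynomial is regular branch, with the sole exception of the Chebyshev polynomial $2z^2-1$. The Chebyshev exception arises from its specific post-critical dynamics (the critical orbit lands on a strictly preperiodic fixed-point configuration), which produces an affine action on the plane rather than a genuine branch structure, and explains why $\mathrm{IMG}(2z^2-1)$ is virtually abelian rather than regular branch.

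For part (ii), I would combine two results from \cite{Quadratic}. \cite[Theorem 4.10]{Quadratic} shows that when $f$ is periodic and not linearly conjugate to $z^2$, the group $G$ is weakly regular branch over $G'$, while \cite[Proposition 3.13]{Quadratic} provides torsion-freeness in the periodic case. The claim that $G/G'$ is non-trivial free abelian then follows from the explicit description of the abelianization of a periodic kneading automata group in \cite{Quadratic}: the wreath recursion governing the $g_p$ imposes only relations compatible with a free abelian quotient, and since $f$ is not conjugate to the powering map the critical cycle has length at least two, so $|P_f|\ge 2$ and the abelianization is non-trivial.

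The main obstacle, while largely bookkeeping, is making the dictionary between the IMG framework used here and the kneading automata framework of \cite{Quadratic} tight enough that their theorems apply verbatim; in particular one must check that the natural generating set by post-critical loops of $\mathrm{IMG}(f)$ realises the kneading-automaton structure exploited in \cite{Quadratic} and that the distinguished exceptional polynomials on each side correspond under the identification. Once this translation is in place, as outlined in the kneading automata subsection, the proof reduces to a direct citation.
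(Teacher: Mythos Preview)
Your proposal is correct and matches the paper's approach exactly: the paper gives no proof of this proposition at all, simply recording it as a restatement of \cite[Theorems~3.12 and~4.10 and Proposition~3.13]{Quadratic}, and your write-up is just an expansion of why that citation suffices via the kneading-automaton dictionary already set up in the preceding subsection.
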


\subsection{Normal subgroups}

In \cite{FPP}, the first author together with Radi generalized the notion of exceptional set of Makarov and Smirnov \cite{MakarovSmirnov}. Given a complex polynomial $f\in \mathbb{C}[x]$ of degree $d\ge 2$, we say that $\Upsilon\subseteq P_f$ is \textit{critically exceptional (for $f$)}  if 
$$\Upsilon=f^{-1}(\Upsilon)\setminus ((C_f\cup P_f)\setminus \Upsilon).$$
The union of critically exceptional sets is critically exceptional and thus there is a maximal critically exceptional $\Upsilon_f$. It is proved in \cite{FPP} that $\#(\Upsilon_f\cap \mathbb{C})\le 2$. We say that~$f$ is \textit{critically exceptional} if $\Upsilon_f\cap \mathbb{C}\ne \emptyset$.

We show that many non critically exceptional iterated monodromy groups satisfy a strong finiteness property:

\begin{lemma}
    \label{lemma: G' fg}
    Let $f\in \mathbb{C}[x]$ be a complex polynomial of degree $d\ge 2$, and let us write $G:=\mathrm{IMG}(f)$. Assume further that
    \begin{enumerate}[\normalfont(i)]
        \item $f$ is not critically exceptional, i.e. $\Upsilon_f\cap \mathbb{C}=\emptyset$;
        \item $G$ is weakly regular branch over $G'$.
    \end{enumerate}
    Then, every normal subgroup $1\ne N\trianglelefteq G$ is finitely generated.
\end{lemma}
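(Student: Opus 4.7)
The plan is to apply \cref{remark: K} with $K = G'$. This reduces the statement to two claims: (a) every subgroup of the quotient $G/G'$ is finitely generated, and (b) the commutator subgroup $G'$ is itself finitely generated. Once both claims are established, \cref{remark: K} immediately yields that every non-trivial normal subgroup of $G$ is finitely generated.

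Claim (a) is straightforward. Since $f$ is post-critically finite, the set $P_f$ is finite, and $G = \mathrm{IMG}(f)$ is generated by the finite family $\{g_p : p \in P_f\}$. Consequently $G/G'$ is a finitely generated abelian group, hence polycyclic, so every subgroup of $G/G'$ is finitely generated.

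Claim (b) is the substantial step. Here I would adapt the inductive scheme used in the proof of \cref{proposition: normal subgroups are fg}, with $G'$ playing the role of $\mathrm{rist}_G(v)$ as indicated in \cref{remark: K}. Concretely, one begins with the finite candidate set $S_0 := \{[g_p, g_q] : p, q \in P_f\}$, which generates $G'$ as a normal subgroup of $G$. The weakly regular branch condition $G' \times \cdots \times G' \leq \psi_1(\mathrm{St}_{G'}(1))$ provides, for each first-level vertex $v$ and each $h \in G'$, a lift in $G'$ with section $h$ at $v$ and trivial sections elsewhere. Using the analog of the key inclusion \Cref{eq: eq1} in this setting, one aims to show that finitely many conjugates of $S_0$ together with finitely many branching lifts already generate $G'$.

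The main obstacle is showing that this descent terminates at a finite level and that no new generators appear beyond it. This is where the hypothesis that $f$ is not critically exceptional is needed: the condition $\Upsilon_f \cap \mathbb{C} = \emptyset$ ensures that every post-critical point admits preimages meeting $C_f \cup P_f$, so that every generator $g_q$ arises as a non-trivial section of some $g_p$ at some vertex of $T$. Combined with level-transitivity of $G$ (which follows from the kneading automaton structure of $\mathrm{IMG}(f)$), this gives enough richness in the sections of the generators to express all conjugates of $S_0$ using a finite generating set after finitely many steps. Without the non-exceptional hypothesis one would fall into degenerate cases (such as the powering map for quadratic $f$, where the whole branching structure collapses) and the argument would break down.
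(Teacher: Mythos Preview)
Your overall plan is right and matches the paper's: invoke \cref{remark: K} with $K=G'$, dispose of (a) via $G/G'$ being finitely generated abelian, and reduce everything to showing $G'$ is finitely generated. The paper does exactly this.

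The gap is in your treatment of (b), specifically in what the non-critically-exceptional hypothesis actually delivers. You write that $\Upsilon_f\cap\mathbb{C}=\emptyset$ ensures ``every generator $g_q$ arises as a non-trivial section of some $g_p$ at some vertex of $T$''. But that is just condition~(i) in the definition of a kneading automaton, and it holds for \emph{every} iterated monodromy group of a post-critically finite polynomial, including the critically exceptional ones. So this cannot be the content of hypothesis~(i), and your argument as written never uses it.

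What is genuinely needed, and what the paper extracts from the hypothesis via the argument of \cite[Theorem~5.6]{FPP}, is much stronger: there exists a level $N$ such that for \emph{every} $v\in\mathcal{L}_N$ and every generator $g_i$ of $G$, one can find an element $s_i^v\in\mathrm{st}_{G'}(v)$ with $s_i^v|_v=g_i$. In other words, the projection of $G'$ at each vertex of level $N$ is all of $G$, not merely all of $G'$. This is the crucial point, and your analogue of \Cref{eq: eq1} does not give it: imitating that inclusion would at best yield $\varphi_u(G'\cap\mathrm{st}_G(u))\ge\varphi_u(\mathrm{rist}_G(u))$, and there is no reason for $\varphi_u(\mathrm{rist}_G(u))$ to equal $G$ in the weakly branch setting. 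Once one has the elements $s_i^v$, the rest is as you outline: the branching lifts $k_{ij}^v\in\mathrm{rist}_{G'}(v)$ with $k_{ij}^v|_v=[g_i,g_j]$ together with the $s_i^v$ generate a subgroup containing $K_N$ (where $\psi_N(K_N)=G'\times\dotsb\times G'$), because conjugating the $k_{ij}^v$ by words in the $s_i^v$ realises the full normal closure $\langle[g_i,g_j]\rangle^G$ inside each factor. Then $G'/K_N$ embeds in a finite extension of $(G/G')^{d^N}$ and is finitely generated, finishing the proof.
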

\begin{proof}
    First note that by \cref{remark: K}, it is enough to show that $G'$ is finitely generated. Let $S:=\{g_1,\dotsc, g_r\}$ be a generating set of $G$. As conditions (i)-(iii) in the definition of a kneading automata group are stronger than their counterparts for the geometric iterated Galois group $\overline{\mathrm{IMG}(f)}$ (i.e. those in \cite[Remark 5.2]{FPP}), we may argue as in the proof of \cite[Theorem 5.6]{FPP} to obtain that there exists $N\ge 1$ such that for each $v\in \mathcal{L}_N$ and $1\le i\le r$ there is $s_i^v\in \mathrm{st}_{G'}(v)$ such that
    $$s_i^v|_v=g_i.$$
    Now, as $G=\langle S\rangle$, we have
    $$G'=\langle [s,t]\mid s,t\in S\rangle^G=\langle [g_i,g_j]\mid 1\le i,j\le r\rangle^{\langle S\rangle}.$$
    As $G$ is weakly regular branch over $G'$, let $k_{ij}^v\in \mathrm{rist}_{G'}(v)$ be such that
    $$k_{ij}^v|_v=[g_i,g_j]$$
    for each $1\le i,j\le r$ and $v\in \mathcal{L}_N$. We write $K_N\le \mathrm{Rist}_G(n)$ for the subgroup such that
    $$\psi_N(K_N)=G'\times\dotsb \times G'.$$
    Then
    \begin{align*}
        G'&\ge \langle s_i^v,k_{ij}^v\mid 1\le i,j\le  r\text{ and }v\in \mathcal{L}_N\rangle\\
        &\ge \prod_{v\in \mathcal{L}_N}\langle k_{ij}^v \mid 1\le i,j\le  r\rangle^{\langle s_i^v \mid 1\le i\le r\rangle}\\
        &= K_N.
    \end{align*}
    Now, note that $G'/K_n$ is finitely generated. Indeed, as
    $$\mathrm{St}_{G'}(N)/K_N\hookrightarrow G/G'\times\dotsb \times G/G'$$
    is a subgroup of a finitely generated abelian group, it is finitely generated itself. Therefore, $G'/K_N$ is finitely generated as $G'/\mathrm{St}_{G'}(N)$ is finite. Let $T\subset G'$ be a finite set such that 
    $$G'/K_N=\langle tK_N\mid t\in T\rangle.$$
    We finally conclude that
    \begin{align*}
        G'&=\langle t,s_i^v,k_{ij}^v\mid t\in T,~ 1\le i,j\le r\text{ and }v\in \mathcal{L}_N\rangle.\qedhere
    \end{align*}
    
\end{proof}

The conditions of \cref{lemma: G' fg} are satisfied by  periodic quadratic polynomials. In fact, if $f$ is a periodic quadratic polynomial, for every pair $x,y\in P_f$ there is some $n\ge 0$ such that $f^n(x)=y$. Hence, as $f(\Upsilon_f)\subseteq \Upsilon_f$, either $\Upsilon_f=P_f$ or $\Upsilon_f=\emptyset$. However, the set $f^{-1}(c)$ contains a point $z$ outside $C_f\cup P_f$, so $c\notin \Upsilon_f$. Thus, we must have $\Upsilon_f=\emptyset$. Lastly, by \cref{proposition: branch structures}, the group $\mathrm{IMG}(f)$ is either infinite cyclic if $f$ is linearly conjugate to $z^2$ or weakly regular branch over $\mathrm{IMG}(f)'$. In either case, we obtain the following:

\begin{corollary}
\label{corollary: G' fg}
    Let $f\in \mathbb{C}[x]$ be a periodic quadratic polynomial and $G:=\mathrm{IMG}(f)$. Then, every normal subgroup $1\ne N\trianglelefteq G$ is finitely generated.
\end{corollary}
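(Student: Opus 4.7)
The plan is to deduce the statement as a short application of \cref{lemma: G' fg}, after disposing of the degenerate case where $f$ is linearly conjugate to the powering map $z^2$. In that case, \cref{proposition: branch structures}(ii) says that $G := \mathrm{IMG}(f)$ is the adding machine, hence infinite cyclic, so every subgroup of $G$ is cyclic and the conclusion is immediate. For all other periodic quadratic polynomials, \cref{proposition: branch structures}(ii) guarantees that $G$ is weakly regular branch over $G'$, verifying hypothesis (ii) of \cref{lemma: G' fg} for free. The main task is therefore to verify hypothesis (i), namely $\Upsilon_f \cap \mathbb{C} = \emptyset$.

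I would check this in two steps. First, since $f$ has period $p \geq 2$ on its unique finite critical point $c$, we have $P_f \cap \mathbb{C} = \{c, f(c), \dots, f^{p-1}(c)\}$, a single $f$-cycle. The defining equation of $\Upsilon_f$ immediately yields $f(\Upsilon_f) \subseteq \Upsilon_f$, so $\Upsilon_f$ is forward $f$-invariant; combined with transitivity of $f$ on the cycle, this forces $\Upsilon_f \cap \mathbb{C}$ to be either empty or the whole of $P_f \cap \mathbb{C}$. Second, I would rule out the latter by showing $c \notin \Upsilon_f$. Using $\Upsilon_f = f^{-1}(\Upsilon_f) \setminus ((C_f \cup P_f) \setminus \Upsilon_f)$, the assumption $c \in \Upsilon_f$ would force $f^{-1}(c) \subseteq \Upsilon_f \cup (C_f \cup P_f)$. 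Since $p \geq 2$, the point $c$ is not a critical value of $f$, so $f^{-1}(c)$ consists of two distinct points: one is $f^{p-1}(c) \in P_f$, and the other, call it $z$, satisfies $z \notin C_f \cup P_f$ by a short case analysis (if $z = f^j(c) \in P_f$ for $0 \le j \le p-1$ then $f^{j+1}(c) = c$ forces $j = p-1$, contradicting distinctness; and $z = c$ would force $p = 1$). Hence $z \in \Upsilon_f$, contradicting $\Upsilon_f \cap \mathbb{C} \subseteq P_f$ from the first step.

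The only non-routine ingredient is the elementary case analysis on $f^{-1}(c)$, and even that is a direct calculation in the quadratic case. Once $\Upsilon_f \cap \mathbb{C} = \emptyset$ is in hand, \cref{lemma: G' fg} finishes the proof. I do not foresee any real obstacle here: the corollary is essentially a packaging statement combining the structural dichotomy of \cref{proposition: branch structures} with the finite-generation result of \cref{lemma: G' fg} in the specific setting of periodic quadratic polynomials.
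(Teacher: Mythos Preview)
Your proposal is correct and follows essentially the same route as the paper: dispose of the $z^2$ case via \cref{proposition: branch structures}(ii), then verify the two hypotheses of \cref{lemma: G' fg} by using forward invariance of $\Upsilon_f$ together with the fact that $P_f\cap\mathbb{C}$ is a single $f$-cycle, and rule out $c\in\Upsilon_f$ by exhibiting a preimage of $c$ outside $C_f\cup P_f$. Your write-up is in fact slightly more careful than the paper's in justifying why the second preimage of $c$ lies outside $C_f\cup P_f$, but the argument is otherwise the same.
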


\subsection{Projection-invariant subgroups}

We say that $G\le \mathrm{Aut}~T$ is \textit{projection-invariant} if $G_v\le G$ for every $v\in T$. Under the assumptions of \cref{lemma: G' fg}, the group $G:=\mathrm{IMG}(f)$ does not contain any projection-invariant proper non-trivial normal subgroup.

Indeed, if $1\ne N\trianglelefteq G$, then there is a vertex $v\in T$ such that $N_v\ge G''$, as $N\ge \mathrm{Rist}_G(n)'\ge G''\times\dotsb \times G''$ for some $n$ large enough by \cref{lemma: normal subgroups and rists}. As in the proof of \cref{lemma: G' fg}, we get $(G')_u=G$ for some $u\in \mathcal{L}_N$ and thus 
$$(G'')_{uu}=((G'')_u)_u\ge (((G')_u)')_u=(G')_u=G.$$
Therefore $N_{vuu}=G$.

This extends previous results of Adams in \cite{Ophelia}, from open non-trivial normal subgroups of $\overline{G}$ to closed non-trivial normal subgroups of $\overline{G}$. Projection-invariant normal subgroups of $\overline{G}$ are closely related to the dynamical properties of the polynomial $f$; see \cite{Ophelia}.

\subsection{Finitely generated Hausdorff spectrum}

\cref{corollary: G' fg} has further applications to the finitely generated Hausdorff spectrum of weakly branch groups.

Let $G\le \mathrm{Aut}~T$ be a closed subgroup. The filtration of level stabilizers induces a metric $d_G(\cdot,\cdot)$ in $G$, given by
$$d_G(g,h):=\inf_{n\ge 0}\{|G:\mathrm{St}_G(n)|^{-1} \mid gh^{-1}\in \mathrm{St}_G(n)\}$$
for any $g,h\in G$. Then, we may define a normalized Hausdorff dimension $\mathrm{hdim}_G(\cdot)$ in the compact metric space $(G,d_G)$. For $b\ge 1$, we define the \textit{b-bounded Hausdorff spectrum} of $G$ as
$$\mathrm{hspec}_{b}(G):=\{\mathrm{hdim}_G(H)\mid H\le G\text{ and } d(H)<b\},$$
where $d(H)$ denotes the minimum number of topological generators of $H$.

The first author, together with Garaialde Ocaña and Uria-Albizuri, proved in \cite{JoneOihana} that if $G$ is a fractal group weakly regular branch over a finitely generated normal subgroup $N$ with $\mathrm{hdim}_{\overline{G}}(\overline{N})=1$, then 
$$\mathrm{hspec}_b(\overline{G})=[0,1]$$
for some $b\ge 1$. In fact, by a result of the first author \cite[Theorem C]{JorgeAV}, the assumption $\mathrm{hdim}_{\overline{G}}(\overline{N})=1$ is always satisfied if $G$ is self-similar. Hence, by \cref{proposition: branch structures} and \cref{corollary: G' fg}, the iterated monodromy group of a periodic quadratic polynomial satisfies these assumptions and its bounded Hausdorff spectrum is complete:

\begin{corollary}
\label{corollary: hspec}
    Let $f\in \mathbb{C}[x]$ be a periodic quadratic polynomial. Then, there exists $b\ge 1$ such that
    $$\mathrm{hspec}_b(\overline{\mathrm{IMG}(f)})=[0,1].$$
\end{corollary}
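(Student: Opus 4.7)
The plan is to view this corollary as a direct application of the spectrum theorem of the first author with Garaialde Ocaña and Uria-Albizuri quoted in the paragraph just before the statement: if $G$ is fractal and weakly regular branch over a finitely generated normal subgroup $N$ with $\mathrm{hdim}_{\overline{G}}(\overline{N}) = 1$, then $\mathrm{hspec}_b(\overline{G}) = [0,1]$ for some $b \ge 1$. The entire task therefore reduces to verifying these hypotheses for $G := \mathrm{IMG}(f)$, with the natural choice $N := G'$.

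Assuming $f$ is not linearly conjugate to the powering map $z^2$ (the case $\overline{\mathrm{IMG}(z^2)} \cong \mathbb{Z}_2$ is degenerate, since its only closed subgroups have normalized Hausdorff dimension $0$ or $1$, and must either be excluded or handled separately), \cref{proposition: branch structures}(ii) supplies exactly what is needed: $G$ is torsion-free and weakly regular branch over $G'$, with $G/G'$ a non-trivial free abelian group, so in particular $G' \ne 1$. Iterated monodromy groups of polynomials are self-similar by construction, and fractality follows from the standard kneading automaton realization (since the generators $g_p$ indexed by $p \in P_f$ furnish sections at every vertex).

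The only delicate hypothesis is finite generation of $G'$, and this is precisely the content of \cref{corollary: G' fg}; the fact that $\Upsilon_f \cap \mathbb{C} = \emptyset$ for periodic quadratic $f$ (established in the paragraph preceding that corollary) is the feature that lets us invoke \cref{lemma: G' fg} with $N = G'$. Finally, self-similarity of $G$, together with \cite[Theorem C]{JorgeAV}, yields $\mathrm{hdim}_{\overline{G}}(\overline{G'}) = 1$ for free. Plugging everything into the JoneOihana spectrum theorem gives the claim. The argument is essentially bookkeeping once finite generation of $G'$ is in hand, so the main obstacle in the overall development, already overcome in the preceding subsection, was proving \cref{corollary: G' fg}; after that, no new ideas are required here.
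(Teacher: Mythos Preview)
Your proposal is correct and follows essentially the same approach as the paper: the paragraph preceding the corollary already assembles \cref{proposition: branch structures}, \cref{corollary: G' fg}, and \cite[Theorem~C]{JorgeAV} to verify the hypotheses of the spectrum theorem from \cite{JoneOihana}, and the corollary is stated as an immediate consequence. Your explicit flagging of the degenerate case $f\sim z^2$ (where $\overline{\mathrm{IMG}(f)}\cong\mathbb{Z}_2$ and the spectrum is $\{0,1\}$ rather than $[0,1]$) and your remark on fractality are points the paper leaves implicit.
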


\cref{corollary: hspec} yields the first infinite family of weakly branch groups with complete bounded Hausdorff spectrum, generalizing the analogous result for the Basilica group $\mathrm{IMG}(z^2-1)$ given in \cite{JoneOihana}.

\subsection{(LR) property}

Now we prove \cref{Theorem: LR quadratic}, namely that (LR) property characterizes the powering map and the Chebyshev polynomial among quadratic complex polynomials:

\begin{proof}[Proof of \cref{Theorem: LR quadratic}]
    First, note that if $f$ is linearly conjugate to the powering map, then $\mathrm{IMG}(f)\cong \mathbb{Z}$. Secondly, if $f$ is linearly conjugate to the Chebyshev polynomial, then it is well-known that $\mathrm{IMG}(f)\cong D_\infty$; see for instance \cite{SelfSimilar}. In both cases, the corresponding iterated monodromy groups are virtually abelian and thus have (LR) by \Cref{lem: props retr}\textcolor{teal}{(iii)}.

    Now, if $f$ is preperiodic and not linearly conjugate to the Chebyshev polynomial, then $\mathrm{IMG}(f)$ is branch by \cref{proposition: branch structures}\textcolor{teal}{(i)}. Hence, the group $\mathrm{IMG}(f)$ does not have (LR) by \cref{Theorem: branch not LR}.

    Lastly, let us assume that $f$ is periodic and not linearly conjugate to the powering map. Then $\mathrm{IMG}(f)'$ is finitely generated by \cref{corollary: G' fg}. Furthermore, the commutator subgroup is normal of infinite index in $\mathrm{IMG}(f)$ by \cref{proposition: branch structures}\textcolor{teal}{(ii)}. Therefore, the group $\mathrm{IMG}(f)$ does not have (LR) in this case neither by \cref{corollary: normal LR}.
\end{proof}




\bibliographystyle{unsrt}

\end{document}